\newcommand{\id}{{\boldsymbol{\mathbbm{1}}}}
\newcommand{\ds}{\,{\rm{ds}} }
\DeclareMathOperator{\curl}{curl}
\DeclareMathOperator{\Curl}{Curl}
\newcommand{\Co}{C_0^{\infty}(\Omega)}
\newcommand{\norm}[1]{\|#1\|}
\newcommand{\R}{\mathbb{R}}
\newcommand{\C}{\mathbb{C}}
\newcommand{\D}{\mathbb{D}}
\renewcommand{\skew}{\mathop{\rm skew}}
\DeclareMathOperator{\sym}{sym}
\DeclareMathOperator{\axl}{axl}
\DeclareMathOperator{\dev}{dev}
\DeclareMathOperator{\so}{\mathfrak{so}}
\DeclareMathOperator{\gl}{\mathfrak{gl}}
\DeclareMathOperator{\Lin}{Lin}
\newcommand{\Sym}{ {\rm{Sym}} }
\newcommand{\Mprod}[2]{ {\langle #1 ,#2\rangle} }
\newcommand{\tr}[1]{ {\Tr \left[{#1}\right]} }
\newtheorem{theorem}{Theorem}[section]
\newtheorem{lemma}[theorem]{Lemma}
	\def\tr{\textrm{tr}}
	\def\dvg{\textrm{Div}}
	\def\crl{\textrm{Curl}}
	\def\dd{\displaystyle}
\def\C{\mathbb{C}}
\def\L{\mathbb{L}_c}
\newtheorem{remark}{Remark}[section]
\let\@fnsymbol\@arabic
\begin{document}
	\global\long\def\u{u}
	\global\long\def\p{P}
	\global\long\def\X{X}
	\global\long\def\me{\mu_{e}}
	\global\long\def\sym{\textrm{sym}}
	\global\long\def\grad{\nabla}
	\global\long\def\le{\lambda_{e}}
	\global\long\def\tr{\textrm{tr}}
	\global\long\def\mc{\mu_{c}}
	\global\long\def\skew{\textrm{skew}}
	\global\long\def\curl{\textrm{Curl}}
	\global\long\def\ac{\alpha_{c}}
	\global\long\def\mh{\mu_{\mathrm{mic}}}
	\global\long\def\lh{\lambda_{\textrm{mic}}}
	\global\long\def\B{\mathscr{B}}
	\global\long\def\R{\mathbb{R}}
	\global\long\def\fr{\rightarrow}
	\global\long\def\Q{\mathcal{Q}}
	\global\long\def\A{\mathscr{A}}
	\global\long\def\L{\mathscr{L}}
	\global\long\def\D{\mathscr{D}}
	\foreignlanguage{italian}{}\global\long\def\id{\mathds{1}}
	\global\long\def\ds{\textrm{dev sym}}
	\global\long\def\sph{\textrm{sph}}
	\global\long\def\eg{\boldsymbol{\varepsilon}}
	\global\long\def\aa{\boldsymbol{\alpha}}
	\global\long\def\o{\boldsymbol{\omega}}
	\global\long\def\ege{\boldsymbol{\varepsilon}_{e}}
	\global\long\def\egp{\boldsymbol{\varepsilon}_{p}}
	\global\long\def\punto{\,.\,}
	\global\long\def\so{\mathfrak{so}}
	\global\long\def\Sym{\textrm{Sym}}
	\global\long\def\MM{\boldsymbol{\mathfrak{M}}}
	\global\long\def\C{\mathbb{C}}
	\global\long\def\gl{\mathfrak{gl}\left(3\right)}
	\global\long\def\P{P}
	\global\long\def\Lin{\textrm{Lin}}
	\global\long\def\D{\boldsymbol{D}}
	\global\long\def\a{\alpha}
	\global\long\def\b{\beta}
	\global\long\def\lle{\lambda_{e}}
	\global\long\def\ce{\mathbb{C}_{e}}
	\global\long\def\cm{\mathbb{C}_{\mathrm{micro}}}

	\global\long\def\axl{\textrm{axl}}
	\global\long\def\dev{\textrm{dev}}
	\global\long\def\Ls{\widehat{\mathbb{L}}}
	\global\long\def\mum{\mu_{\mathrm{macro}}}
	\global\long\def\lam{\lambda_{\mathrm{macro}}}
	\global\long\def\mh{\mu_{\mathrm{micro}}}
	\global\long\def\lh{\lambda_{\textrm{micro}}}
	\global\long\def\vau{\omega_{1}^{int}}
	\global\long\def\vad{\omega_{2}^{int}}
	\global\long\def\axl{\textrm{axl}}
	
	\title{Nonstandard micro-inertia terms in the relaxed micromorphic model: well-posedness for
		dynamics }
	\author{
		Sebastian Owczarek\,\thanks{Sebastian Owczarek, \ \  Faculty of Mathematics and Information Science, Warsaw University of Technology, ul. Koszykowa 75, 00-662 Warsaw, Poland; email: s.owczarek@mini.pw.edu.pl}\quad
		and \quad	Ionel-Dumitrel Ghiba\footnote{Ionel-Dumitrel Ghiba, \   Alexandru Ioan Cuza University of Ia\c si, Department of Mathematics,  Blvd. Carol I, no. 11, 700506 Ia\c si,
			Romania;  Octav Mayer Institute of Mathematics of the
			Romanian Academy, Ia\c si Branch,  700505 Ia\c si; email: dumitrel.ghiba@uaic.ro} 
		\quad
		and \quad Marco-Valerio d'Agostino\footnote{Marco-Valerio d'Agostino, \ \  Laboratoire de G\'{e}nie Civil et Ing\'{e}nierie Environnementale,
			Universit\'{e} de Lyon-INSA, B\^{a}timent Coulomb, 69621 Villeurbanne
			Cedex, France,
			email: marco-valerio.dagostino@insa-lyon.fr}	\quad
		and \quad
		\\	Patrizio Neff\,\thanks{Patrizio Neff, \ \  Head of Lehrstuhl f\"{u}r Nichtlineare Analysis und Modellierung, Fakult\"{a}t f\"{u}r Mathematik, Universit\"{a}t Duisburg-Essen, Campus Essen, Thea-Leymann Str. 9, 45127 Essen, Germany, email: patrizio.neff@uni-due.de}
		 }
	\maketitle
\begin{abstract}
We study the existence of  solutions  arising from the modelling of elastic materials using generalized theories of continua. In view of some evidence from physics of meta-materials we focus our effort on two recent nonstandard relaxed micromorphic models including novel micro-inertia terms. These novel micro-inertia terms are needed to better capture the band-gap response. The existence proof is based on the Banach fixed point theorem.

\bigskip

\textit{Mathematics Subject Classification}: 
35M33, 	35Q74, 	74H20, 	74M25, 	74B99 
\smallskip

\textit{Keywords}: 
wave propagation, generalised continua, micro-inertia
\end{abstract}
\tableofcontents
\section{Introduction}

\subsection{Preliminaries and motivation}
The micromorphic theory \cite{Eringen64,Mindlin64} is a generalised theory of continua which, in order to describe both macro- and micro-deformation, considers that any point of the body is endowed with two   {fields}, a vector field $u:\Omega\times[0,T]\rightarrow\mathbb{R}^3$ for the displacement of the macroscopic material points, and a tensor field 
 $P:\Omega\times[0,T]\rightarrow\mathbb{R}^{3\times3}$ describing the micro-deformation (micro-distortion) of  the substructure of  the material. This theory was introduced 58 years ago by Eringen and Mindlin and an important part of its reason to be was to have a model which is capable to improve  agreement between the analytical and numerical results regarding wave propagation indicated by the model and those obtained in experiments on actual materials. When  the classical theory of linear
 elasticity is used, no dispersive effects can be predicted. However, the elastic wave propagation through heterogeneous media is generally
 dispersive: each wave number travels with a distinct
wave speed.
 Some other possible applications of the micromorphic model  are described in \cite{Eringen99}.

 It is well-known that the Cosserat theory (the case when the micro-distortion  $P$ is antisymmetric) is a particular case of the micromorphic theory. The Cosserat theory was introduced in 1909  \cite{Cosserat09} (see also \cite{Neff_Chelminski05_dyn,Neff_JeongMMS08,Neff_curl06,Neff_micromorphic_rse_05,neff2015existence}) but it was not really taken into account until the micromorphic theory was introduced.   The biggest shortcoming of the classical micromorphic theory is that it involves an excessively large  number of constitutive coefficients, which must be determined if one wants to  use it in applications.  In order to avoid this shortcoming, one solution  is to adapt the micromorphic theory to the phenomena we want to model. This is done for instance in the works \cite{Neff_Forest07,Forest06} in which $P$ is assumed to be symmetric (the so called microstrain model).  
 
 In a recent paper \cite{NeffGhibaMicroModel}, we have introduced   a modified micromorphic theory which we  called the relaxed micromophic model.  This model completes the model which was proposed in \cite{Eringen_Claus69} and which was critically discussed in \cite{Teodosiu}. Since Claus and Eringen  have introduced such a model having in mind its application to the dislocation theory, the main criticism centered about  the non-symmetry of the force stress tensor. At that time it was not clear that such a theory may have a chance to be well-posed, even  when the  force stress tensor is symmetric. However, due to some new Poincar\'{e}-Korn type inequalities \cite{BNPS2, NeffPaulyWitsch}, the relaxed micromorphic model is well-posed  also when the  force stress tensor is symmetric \cite{GhibaNeffExistence}.
We have explained in \cite{NeffGhibaMadeoLazar} why the relaxed model is a particular case of the classical Mindlin-Eringen model and how the energy of the relaxed model may be obtained taking some suitable form of the constitutive coefficients in the classical theory. The relaxed micromorphic theory is still general enough to incorporate   the kinematics of the most used particular theories of the classical micromorphic theory, see  \cite{NeffGhibaMicroModel} and Figure \ref{mr}.
\begin{figure}[H]
	\begin{centering}
			\hspace{2cm}\includegraphics[scale=0.4]{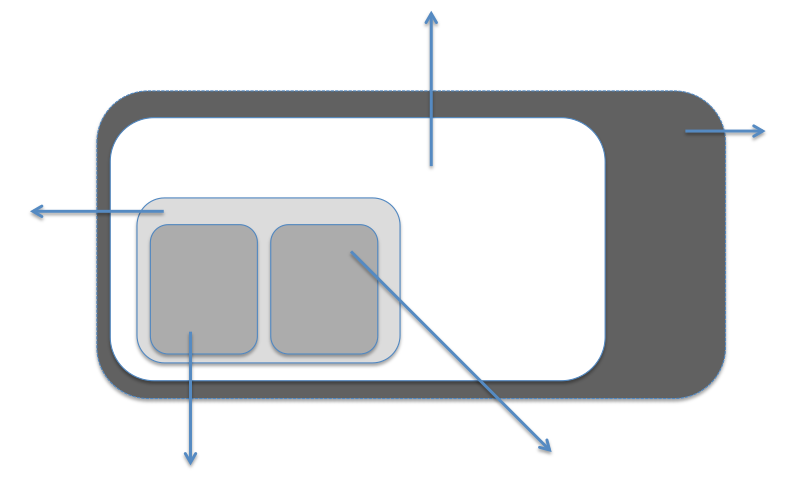} 
			\put(-1,135){\textbf{classical micromorphic}}
			\put(-3,125){\textbf{model}}
			\put(-1,110){$P\in \mathbb{R}^{3\times 3},\ \ \nabla P$}
		\put(-180,190){\textbf{relaxed micromorphic model}}
		\put(-130,175){$P\in \mathbb{R}^{3\times 3},\ \ {\rm Curl}\, P$}
	\put(-370,110){\textbf{microstretch}}
	\put(-370,85){$P\in \mathbb{R}\cdot \id+\so(3),$}
	\put(-370,65){$\nabla P$}
\put(-370,100){\textbf{model}}
\put(-300,-5){\textbf{Cosserat model}}
\put(-300,-15){$P\in \so(3),\ \ \nabla P$}
\put(-100,0){\textbf{microvoids model}}
\put(-100,-10){$P\in \mathbb{R}\cdot \id,\ \ \nabla P$}\end{centering}
	\caption{\label{mr} \footnotesize{An illustration of the relations between different theories of materials with microstructure: the classical micromorphic model \cite{Eringen64}; the linear  microstretch  model \cite{Eringen99}; the linear isotropic Cosserat model \cite{Cosserat09};  the linear isotropic microvoids model \cite{NunziatoCowin79}. Comparison with other models are discussed in \cite{NeffGhibaMicroModel}.} }
	
\end{figure}

Considering a wave ansatz in the partial differential equations arising in the relaxed micromorphic model, some band gaps arise in the diagrams describing the dispersion curves. This means that there exists an interval of wave frequencies for which no wave propagation may  occur \cite{MadeoNeffGhibaW,madeo2016complete,madeo2016reflection}. Such an interesting phenomena is  obtained  when meta-materials are designed and may not be captured by the classical micromorphic approach \cite{d2017panorama,madeo2017review}. These meta-materials are able to “absorb” or even “bend” elastic waves with no energetic cost (see e.g. \cite{blanco2000large,liu2000locally}). They are conceived arranging small components into periodic or quasi-periodic patterns in such a way that the resulting structure possesses new unimaginable properties with respect to the original material.
 This insight has opened a way  to obtain    new generalized continuum models allowing to describe the behavior of meta-structures in the simplified framework of continuum mechanics with homogenized material behavior. Moreover, due to the a priori relation between macro and micro parameters of the considered samples obtained in \cite{barbagallo2017transparent},  the constitutive  parameters of our model can be identified on real meta-materials \cite{Barbagalo2018,d2017effective} even for anisotropic materials, opening the way to the efficient design and realization of meta-structures.

Besides the interesting property of the relaxed micromorphic theory to model certain band-gap meta-materials, the mathematical results which can be obtained are also surprising:  it is noted that all the results which assume that the energy is positive definite in the relaxed model are in fact valid without assuming that the corresponding energy is positive definite in the classical Mindlin-Eringen model. Therefore, for the relaxed model we have obtained \cite{GhibaNeffExistence} extensions of the results established by S\'{o}os \cite{Soos69}, by Hlav{\'a}{\v c}ek \cite{Hlavacek69},  by Ie\c san and Nappa \cite{IesanNappa2001} and by Ie\c san \cite{Iesan2002} which all use the positive definitness in the Mindlin-Eringen model. We  mention  also the  results given  by Picard et al. \cite{picard} which have proved an existence result in the linear theory of micromorphic elastic solids using an interesting   mother/descendant mechanism regarding various models. On the other hand, almost all the mathematical results  are obtained in the classical Mindlin-Eringen model assuming  a strong anchoring boundary condition: the microdistortion $P$ is completely described at one part of the boundary. Our model gives  us the possibility to relax this condition and to be able to prescribe only the tangential components of the rows of $P$.

Going back to the applications of the relaxed micromorphic model in the modelling of wave propagation in meta-materials, it can be observed that for large wavenumbers the dispersion curves obtained from our model have a different behavior than  those which are predicted by the 
Bloch-Floquet analysis. By introducing some nonstandard micro-inertia terms in the form of the kinetic energy the fitting between the dispersion curves given by the Bloch-Floquet analysis and by the new model is considerably improved \cite{madeo2017role,madeo2018modeling,d2017effective,aivaliotis2018low}.    {In a recent paper \cite{Barbagalo2018} it is shown that while the linear elastic Cauchy  continuum is able to describe the pulse propagation through the
micro-structured domain only in the low frequency regime,  the relaxed micromorphic
model with these nonstandard micro-inertia terms is able to account for the overall behavior of the material for  frequency regimes
including the band-gap and higher frequencies. Moreover, this property  seems to be  difficult to obtain with other generalized continuum models.}

The idea of considering some new terms in the form of the kinetic energy is not entirely new, it already appeared in the linear  theory of nonlocal elasticity  introduced in \cite{eringen1972linear} and derived from a more general theory developed by Eringen and Edelen \cite{eringen1972nonlocal}. Gradient elasticity can also be used to capture
the dispersive behavior of waves that propagate through a heterogeneous material and  the presence of micro-inertia terms was used, before the work of Eringen \cite{eringen1972linear}, also by Mindlin  \cite[page 68]{Mindlin64}. The difference is that Mindlin used micro-inertia terms only in the context of gradients elasticity, while Eringen used them without considering higher order derivatives of the displacement \cite{askes2011increasing}. Various  formats of gradient elasticity with inertia gradients have been considered in many works \cite{rubin1995continuum,wang2002modeling,chen2001dispersive,domenico2016new,bennett2007elasticity,gitman2007gradient}. But there does not seem to  exist a rigorous mathematical treatment of these models.

The main aim of this paper is to present  relaxed micromorphic models  including some nonstandard micro-inertia terms and to prove that these models  are well-posed. Our analysis covers also the case when the stress tensor is symmetric since, in our mathematical approach, the presence of the constitutive term which makes it non-symmetric is not essential: for the mathematical analysis of the model the presence of the Cosserat couple modulus  is redundant. However, in the study of meta-materials this coefficient is essential when one wants to model the band-gap phenomena \cite{MadeoNeffGhibaW}. 

The plan of the paper is now the following.  In Section 2 we introduce the initial-boundary value problem arising in the model when non-standard inertia terms are present and we prove the existence of the solution. The used approach is not  common  in elastodynamics, it is based on the Banach-fixed point theorem  and it was suggested by an approach from thermo-visco-plasticity \cite{chelminski2016renormalized}. We  mention that  the  usual approaches based on the semigroup theory of linear operators or on the Galerkin method  seem to not be able to lead to an existence result without assuming  a priori some compatibilities between the domains of the operators involved in our partial differential equations. We avoid these assumptions, which, from a mechanical point of view lead to a very particular model  which is not capable to describe the behavior of waves in meta-materials, i.e. it does not correspond to  the main raison  d'\^{e}tre of our model.  In the last section,  we introduce a simplified model and we prove that the corresponding  initial-boundary value problem is still well-posed.

\section{Relaxed micromorphic models including micro-inertia terms}\setcounter{equation}{0}

We introduce a new model which takes into account the influence of some nonstandard micro-inertia terms on the behavior of the solution of the obtained initial-boundary value problem.

We consider a micromorphic continuum which occupies a bounded domain $\Omega$  having a piecewise smooth
surface $\partial \Omega$.  The motion of the body is referred to a fixed system of rectangular Cartesian axes $Ox_i$, $(i=1,2,3)$. Throughout this paper {(if we do not specify otherwise)} Latin subscripts take the values $1,2,3$.

We denote by $\mathbb{R}^{3\times 3}$ the set of real $3\times 3$ matrices.   For all $X\in\mathbb{R}^{3\times3}$ we set $\sym\, X=\frac{1}{2}(X^T+X)$ and $\skew X=\frac{1}{2}(X-X^T)$.
For $a,b\in\mathbb{R}^3$ we let $\langle {a},{b}\rangle_{\mathbb{R}^3}$  denote the scalar product on $\mathbb{R}^3$ with
associated vector norm $\norm{a}_{\mathbb{R}^3}^2=\Mprod{a}{a}_{\mathbb{R}^3}$.
The standard Euclidean scalar product on $\mathbb{R}^{3\times 3}$ is given by
$\Mprod{X}{Y}_{\mathbb{R}^{3\times3}}=\tr({X Y^T})$, and thus the Frobenius tensor norm is
$\|{X}\|^2=\langle{X},{X}\rangle_{\mathbb{R}^{3\times3}}$. In the following we omit the index
$\mathbb{R}^3,\mathbb{R}^{3\times3}$. The identity tensor on $\mathbb{R}^{3\times3}$ will be denoted by $\id$, so that
$\tr({X})=\langle{X},{\id}\rangle$. 
We adopt the usual abbreviations of Lie-algebra theory, i.e.,
$\so(3):=\{X\in\mathbb{R}^{3\times3}\;|X^T=-X\}$ is the Lie-algebra of  skew symmetric tensors
and $\Sym(3)$  denotes the set of symmetric tensors.

By $\Co$ we denote  the set of smooth functions with compact support in $\Omega$.
All the usual Lebesgue spaces of square integrable functions, vector or tensor fields on $\Omega$ with values in $\mathbb{R}$, $\mathbb{R}^3$ or $\mathbb{R}^{3\times 3}$, respectively will be generically denoted by $L^2(\Omega)$. Moreover, we use the standard Sobolev spaces \cite{Adams75,Leis86,Raviart79}
\begin{align}
&{\rm H}^1(\Omega)=\{u\in L^2(\Omega)\, |\, {\rm grad}\, u\in L^2(\Omega)\},\qquad \quad\  \|u\|^2_{{\rm H}^1(\Omega)}:=\|u\|^2_{L^2(\Omega)}+\|{\rm grad}\, u\|^2_{L^2(\Omega)}\, ,\notag\\
&{\rm H}({\rm curl};\Omega)=\{v\in L^2(\Omega)\, |\, {\rm curl}\, v\in L^2(\Omega)\}, \qquad \|v\|^2_{{\rm H}({\rm curl};\Omega)}:=\|v\|^2_{L^2(\Omega)}+\|{\rm curl}\, v\|^2_{L^2(\Omega)}\, ,\notag
\end{align}
of functions $u$ or vector fields $v$, respectively. Furthermore, we introduce their  {closed} subspaces ${\rm H}_0^1(\Omega)$, and ${\rm H}_0({\rm curl};\Omega)$ as the closure with respect to the associated graph norms of $C_0^\infty(\Omega)$. Roughly speaking, ${\rm H}_0^1(\Omega)$ is the subspace of functions $u\in {\rm H}^1(\Omega)$ 
which are
zero on
$\partial \Omega$, while ${\rm H}_0({\rm curl};\Omega)$ is the subspace of vectors $v\in{\rm H}({\rm curl};\Omega)$ which are normal at $\partial \Omega$ (see \cite{NeffPaulyWitsch,NPW2,NPW3}). For vector fields $v$ with components in ${\rm H}^{1}(\Omega)$ and tensor fields $P$ with rows in ${\rm H}({\rm curl}\,; \Omega)$, i.e.,
{\begin{align*}
	v=\left(
	v_1,   v_2,    v_3
	\right)^T,\quad  v_i\in {\rm H}^{1}(\Omega),
	\ \quad
	P=\left(
	P_1^T,    P_2^T,    P_3^T
	\right)^T,\, \quad P_i\in {\rm H}({\rm curl}\,; \Omega)\,,
	\end{align*}
	we define
	\begin{align*}
	\nabla\,v:=\left(
	({\rm grad}\,  v_1)^T,
	({\rm grad}\, v_2)^T,
	({\rm grad}\, v_3)^T
	\right)^T,\qquad  {\rm Curl}\,P:=\left(
	({\rm curl}\, P_1)^T,
	({\rm curl}\,P_2)^T,
	({\rm curl}\,P_3)^T
	\right)^T.
	\end{align*}}
We note that $v$ is a vector field, whereas $P$, ${\rm Curl}\, P$ and $\nabla\, v$ are second order tensor fields. The corresponding Sobolev spaces will be denoted by
$
{\rm H}^1(\Omega) \ \ \text{and}\  \ {\rm H}({\rm Curl}\,; \Omega)\, ,
$ and $
{\rm H}^1_0(\Omega) \ \ \text{and}\  \ {\rm H}_0({\rm Curl}\,; \Omega)\, ,
$ respectively.

For a  fourth order tensor $\mathbb{C}$ and $X\in \mathbb{R}^{3\times 3}$, we have $\C.  X\in \mathbb{R}^{3\times 3}$ with the components
$
(\C. X)_{ij}=\mathbb{C}_{ijkl}\,X_{kl}\, , 
$ while for a sixth order tensor $\mathbb{L}$  we consider
$
\mathbb{L}. Z\in \mathbb{R}^{3\times 3\times 3}$ for all $Z\in \mathbb{R}^{3\times 3\times 3}$, $(\mathbb{L}. Z)_{ijk}=\mathbb{L}_{ijkmnp}Z_{mnp},
$
{where  Einstein's summation rule is used}.

\subsection{Description of the mechanical model}

 The mechanical
model  is formulated in the variational context. This means that we
consider an action functional on an appropriate function-space. 
The space of configurations
of the problem is 
\[
\mathcal{Q}:=\left\{ \left(\u,\P\right)\in C^{1}\left(\overline{\Omega}\times [0,T],\R^{3}\right)\times C^{1}\left(\overline{\Omega}\times  [0,T],\R^{3\times3}\right):\left(\u,\P\right)\textrm{ verifies conditions }\left(\mathsf{B}_{1}\right)\textrm{ and }\left(\mathsf{B}_{2}\right)\right\} 
\]
where
\begin{itemize}
	\item $\left(\mathsf{B}_{1}\right)$ are the boundary conditions $u\left(x,t\right)=\varphi\left(x,t\right)$
	and $P_{i}\left(x,t\right)\times n=\psi_{i}\left(x,t\right)$, $i=1,2,3$,
	$\quad\left(x,t\right)\in\partial\Omega\times\left[0,T\right]$, where
	$n$ is the unit outward normal vector on $\partial\Omega\times\left[0,T\right]$,
	$P_{i},\,i=1,2,3$ are the rows of $P$ and $\varphi,\psi_{i}$ are
	prescribed   {continuous} functions and $[0,T]$ is the time interval;
	\item $\left(\mathsf{B}_{2}\right)$ are the initial conditions $\left.\u\right|_{t=0}=\u_{0},\left.\u_{,t}\right|_{t=0}=\underline{\u}_{0},\left.P\right|_{t=0}=P_{0},\left.P_{,t}\right|_{t=0}=\underline{P}_{0}\;\textrm{in }\Omega$,
	where $u_{0}\left(x\right),\underline{\u}_{0}\left(x\right),$ $P_{0}\left(x\right),\underline{P}_{0}\left(x\right)$
	are prescribed smooth functions.
\end{itemize}
The action functional $\mathscr{A}:\mathcal{Q}\to\R,$ is the sum
of the internal and external action functionals $\mathscr{A}_{\mathscr{L}}^{int},\mathscr{A}^{ext}:\mathcal{Q}\to\R$
defined as follows
\begin{align}
\mathscr{A}_{\mathscr{L}}^{int}\left[\left(\u,\P\right)\right] & := \int_0^T\int_{\Omega}\L\left(\u_{,t},\P_{,t},\nabla\u_{,t},\curl\,P_{,t},\nabla\u,\P,\curl\,P\right)dv\,dt,\\
\mathscr{A}^{ext}\left[\left(\u,\P\right)\right] & := \int_0^T\int_{\Omega}\left(\left\langle f,u\right\rangle +\left\langle M,P\right\rangle \right)dv\,dt,\nonumber 
\end{align}
where $\L$ is the Lagrangian density of the system and $f,M$
are the body force and double body force, and comma followed by a subscript denotes the time derivative. We recommend also the paper by Germain \cite{germain1973method} for some explanations about the physical significance of the involved quantities.  In order to find the stationary points of
the action functional, we have to calculate its first variation:
\begin{gather*}
\delta\mathscr{A}=\delta\mathscr{A}_{\L}^{int}=\delta \int_0^T\int_{\Omega}\L\left(\u_{,t},\P_{,t},\nabla\u_{,t},\curl\,P_{,t},\nabla\u,\P,\curl\,P\right)dv\,dt.
\end{gather*}
For the Lagrangian energy density we assume the standard split in
kinetic minus potential energy density:

\[
\mathscr{L}\left(\u_{,t},\P_{,t},\nabla\u_{,t},\curl\,P_{,t},\nabla\u,\P,\curl\,\p\right)=K\left(\u_{,t},\P_{,t},\nabla\u_{,t},\curl\,P_{,t}\right)-W\left(\nabla\u,\P,\curl\,\p\right),
\]

In general anisotropic linear elastic relaxed micromorphic homogeneous media,  we consider 
that the kinetic energy and the potential energy density have the following
expressions 
\begin{align*}
K\left(\u_{,t},\P_{,t},\nabla\u_{,t},\curl\,P_{,t}\right) & =\frac{1}{2}\left\langle \rho\,u_{,t},u_{,t}\right\rangle +\frac{1}{2}\left\langle J\:\P_{,t},\P_{,t}\right\rangle+\frac{1}{2}\left\langle \widetilde{\mathbb{C}}_{e}.\,\sym\left(\grad\u_{,t}-P_{,t}\right),\sym\left(\grad\u_{,t}-P_{,t}\right)\right\rangle \\&\qquad+\frac{1}{2}\left\langle \widetilde{\mathbb{C}}_{\rm c}.\:\skew\left(\grad\u_{,t}-P_{,t}\right),\skew\left(\grad\u_{,t}-P_{,t}\right)\right\rangle \\
& \qquad+\frac{1}{2}\left\langle \widetilde{\mathbb{C}}_{\rm micro}.\,\sym\,P_{,t},\sym\,P_{,t}\right\rangle + {\mu}\,\frac{L_{c}^{2}}{2}\left\langle \widetilde{\mathbb{L}}_{\textrm{aniso}}.\,\curl\,P_{,t},\curl\,P_{,t}\right\rangle, \\
W\left(\nabla\u,\P,\curl\,\p\right) & =\underbrace{\frac{1}{2}\left\langle \mathbb{C}_{e}.\,\sym\left(\grad\u-P\right),\sym\left(\grad\u-P\right)\right\rangle }_{\textrm{anisotropic elastic - energy}}+\underbrace{\frac{1}{2}\left\langle {\mathbb{C}}_{\rm c}.\:\skew\left(\grad\u-P\right),\skew\left(\grad\u-P\right)\right\rangle }_{\textrm{invariant local anisotropic rotational elastic coupling}}\\
& \qquad+\underbrace{\frac{1}{2}\left\langle \cm.\,\sym\,P,\sym\,P\right\rangle}_{\textrm{micro - self - energy}}+\underbrace{\: {\mu}\,\frac{L_{c}^{2}}{2}\left\langle {\mathbb{L}}_{\textrm{aniso}}.\,\curl\,P,\curl\,P\right\rangle }_{\textrm{ curvature-energy}},
\end{align*}
where
\[
\begin{cases}
\C_{e}, \widetilde{\C}_{e},\cm,  \widetilde{\C}_{\rm micro}:\Sym(3)\fr\Sym(3) & \textrm{are dimensionless  \ensuremath{4^{th}}order tensors  }
\\& \textrm{with 21 independent components},\vspace{2mm}\\
{\C}_{\rm c}, \widetilde{\C}_{\rm c}:\so(3)\fr\so(3) & \textrm{are dimensionless \ensuremath{4^{th}} order tensors},\\& \textrm{with 6 independent components},\vspace{2mm}\\
{\mathbb{L}}_{\textrm{aniso}}, \widetilde{\mathbb{L}}_{\textrm{aniso}}:\R^{3\times3}\fr\R^{3\times3} & \textrm{are dimensionless \ensuremath{4^{th}} order tensors},\\
&\textrm{with 45 independent components},
\end{cases}
\]
the positive constants $\rho, J>0$  are the macro-inertia and micro-inertia density, $L_{c}>0$ is the characteristic length of the relaxed micromorphic
model and $\mu> 0$ is a parameter used for dimension compatibility of the involved terms. The limit case $L_{c}\to 0$ corresponds to considering very large specimens of a microstructured meta-material.

For simplicity,  we  omit  the mixed terms in the form of our energies. One reason is that as long as there exist no clear mechanical interpretation of the influence of these terms on the process we want to model, we decided to keep the formulation as simple as possible. If, from some practical problems, the presence of these terms is requested, then they may be included. Another motivation, and the most important related to  the present paper, is that our mathematical analysis can be extended in a straightforward manner to the case when the mixed terms are also present in the total energy.

Compared to the classical Mindlin-Eringen model,  the curvature dependence is reduced to a dependence only on the {\it micro-dislocation tensor} $\alpha:=-\Curl P\in\mathbb{R}^{3\times 3}$ instead of
$\gamma=\nabla P\in\mathbb{R}^{27}=\mathbb{R}^{3\times 3\times 3}$. Doing so, the first main advantage of the relaxed micromorphic model is that the number of constitutive coefficients is drastically reduced. A second strong point of the relaxed model is that it is possible to show that in the limit case $L_c \to 0$ (which corresponds to considering very large specimens of a microstructured meta-material) the meso- and micro-coefficients $\mathbb{C}_{\rm e}$ and $\mathbb{C}_{\rm micro}$ of the relaxed model can be put in direct relation with the macroscopic stiffness of the medium via a fundamental homogenization formula, in contrast to the Eringen-Mindlin theory \cite{Eringen99,Mindlin64}, where it is  impossible to obtain this kind of results, see \cite{barbagallo2017transparent,neff2017real}.

   {We point out another important aspect, an existence result based on the assumption that the internal energy  is positive definite covers a particular situation when the energy from the classical Eringen-Mindlin micromorphic theory is positive semi-definite.} Moreover, the case ${\mathbb{C}}_{\rm c}=0$, i.e. the situation in which the force stress tensor is symmetric (see \eqref{eqrelax}),  or the situation when ${\mathbb{C}}_{\rm c}$ is only positive semi-definite are also covered by our analysis, and the variational setting  allows to prescribe tangential boundary conditions, i.e. ${P}_i({x},t)\times \,n(x) =0, \  i=1,2,3, 
\ ({x},t)\in\partial \Omega\times [0,T]$.

  {Only for the sake of simplicity, in the rest of the paper we assume  that the constitutive coefficients ${\mathbb{C}_{\rm e}}, {{\mathbb{C}}_{\rm micro}}, {\mathbb{C}}_{\rm c} ,{\mathbb{L}}_c$, ${\widetilde{\mathbb{C}}_{\rm e}}, {\widetilde{\mathbb{C}}_{\rm micro}}, \widetilde{\mathbb{C}}_{\rm c} ,\widetilde{\mathbb{L}}_c$  are constant  and they
have the following symmetries
\begin{align}\label{simetries}
&(\mathbb{C}_{\rm e})_{ijrs}=(\mathbb{C}_{\rm e})_{rsij}=(\mathbb{C}_{\rm e})_{jirs},\quad\quad \quad\quad \quad\quad \ \  ({\mathbb{C}}_{\rm c})_{ijrs}= -({\mathbb{C}}_{\rm c})_{jirs}= ({\mathbb{C}}_{\rm c})_{rsij},\notag\\&({\mathbb{C}}_{\rm micro})_{ijrs}=({\mathbb{C}}_{\rm micro})_{rsij}=({\mathbb{C}}_{\rm micro})_{jirs},\quad\quad  ({\mathbb{L}_{\rm aniso}})_{ijrs}=({\mathbb{L}_{\rm aniso}})_{rsij}\,,\\
&(\widetilde{\mathbb{C}}_{\rm e})_{ijrs}=(\widetilde{\mathbb{C}}_{\rm e})_{rsij}=(\widetilde{\mathbb{C}}_{\rm e})_{jirs},\quad\quad \quad\quad \quad\quad \ \   (\widetilde{\mathbb{C}}_{\rm c})_{ijrs}= -(\widetilde{\mathbb{C}}_{\rm c})_{jirs}= (\widetilde{\mathbb{C}}_{\rm c})_{rsij},\notag\\&(\widetilde{\mathbb{C}}_{\rm micro})_{ijrs}=(\widetilde{\mathbb{C}}_{\rm micro})_{rsij}=(\widetilde{\mathbb{C}}_{\rm micro})_{jirs},\quad\quad  (\widetilde{\mathbb{L}}_{\rm aniso})_{ijrs}=(\widetilde{\mathbb{L}}_{\rm aniso})_{rsij}\,.\notag
\end{align} 
 {The case of inhomogeneous media may be treated, under the supplementary assumption that the constitutive coefficients are bounded, as functions on ${\Omega}$.}}

We find, after considering the first variation of the action functional $\mathscr{A}$, that the general equations of the relaxed micromorphic model including the nonstandard inertia terms are 
\begin{align}\label{eqrelax}
 &\text{balance of forces}:\notag\\
  \rho\,u_{,tt}&-\dvg[ \widetilde{\C}_e. \sym(\nabla u_{,tt}-P_{,tt})+\widetilde{\C}_{\rm c}.\,{\rm skew}(\nabla u_{,tt}-P_{,tt})]=\notag\\&\ \ \ \ \dvg[ \underbrace{\mathbb{C}_{\rm e}. \sym(\nabla u-P)+{\C}_{\rm c}.\,{\rm skew}(\nabla u-P)}_{\sigma-\text{force stress tensor}}]+f\, , \quad\qquad \qquad\qquad \qquad \qquad\ \ \quad \quad  \notag\\\\
 & \text{balance of moment stresses}:\notag
  \\
J\,P_{,tt}&+ {\mu}\,L_{c}^{2}\,\crl[ \widetilde{\mathbb{L}}_{\rm aniso}.\crl\, P_{,tt}]-\widetilde{\C}_e. \sym (\nabla u_{,tt}-P_{,tt})-\widetilde{\C}_{\rm c}.\,{\rm skew}(\nabla u_{,tt}-P_{,tt})+\widetilde{\C}_{\rm micro}. \sym P_{,tt}=\notag\\&\quad -{\mu}\,L_{c}^{2}\, \crl[ {\mathbb{L}}_{\rm aniso}.\crl\, P]+\mathbb{C}_{\rm e}. \sym (\nabla u-P)+{\C}_{\rm c}.\,{\rm skew}(\nabla u-P)-{\C}_{\rm micro}. \sym P+M. \notag
\end{align}

To the above system of partial differential equations, we adjoin  the  boundary conditions
\begin{align} \label{bc}
{u}({x},t)=0, \ \ \  \text{and the {\it tangential condition}}  \quad {P}_i({x},t)\times n(x) =0, \ \ \ i=1,2,3, \ \ \
\ ({x},t)\in\partial \Omega\times [0,T],
\end{align}%
where  $P_i, i = 1, 2, 3$ are
the rows of $P$ and the  initial conditions
\begin{align}\label{ic}
{u}({x},0)={u}_0(
x),\quad \quad \quad{u}_{,t}({x},0)=\underline{u}_0(
x),\quad \quad \quad
{P}({x},0)={P}_0(
x), \quad \quad \quad {P}_{,t}({x},0)=\underline{P}_0(
x),\ \ \text{\ \ }{x}\in {\Omega},
\end{align}%
where  ${u}_0, \underline{u}_0, {P}_0$ and $\underline{P}_0$ are prescribed functions.

  {Regarding  the  tensors $\widetilde{\mathbb{C}}_{\rm e}$, and $\widetilde{\mathbb{L}}_{\rm aniso}$, we assume that they are positive definite, i.e. 
	there exist  positive numbers ${\widetilde{\mu}_{\rm e}^m}, {\widetilde{\mu}_{\rm e}^M}>0$ (the maximum and minimum elastic moduli for $\widetilde{\C}_e$),   $\widetilde{L}_{\rm c}^M$, $\widetilde{L}_{\rm c}^m>0$ (the maximum and minimum moduli for $\widetilde{\mathbb{L}}_{\rm aniso}$) 
	\begin{align}\label{posdef}
	{\widetilde{\mu}_{\rm e}^m}\|X \|^2\leq \langle\,\widetilde{\mathbb{C}}_{\rm e}. X,&X\rangle\leq {\widetilde{\mu}_{\rm e}^M}\|X \|^2\,\quad\quad  \notag\ \ \text{for all }\ \ X\in\Sym(3),
	\\
	\widetilde{L}_c^m\|X \|^2\leq \langle \widetilde{\mathbb{L}}_{\rm aniso}. X,&X\rangle\leq \widetilde{L}_c^M\| X\|^2\,\quad \quad\  \ \text{for all }\ \ X\in\mathbb{R}^{3\times 3}.
	\end{align}
	We also suppose that the constitutive tensors  $\widetilde{\C}_{\rm c}$ and  $\widetilde{\C}_{\rm micro}$ are positive semi-definite, i.e 
	\begin{align}\label{posdef3}
	0&\leq \langle\,\widetilde{\C}_{\rm c}. X,X\rangle\qquad\ \, \quad  \ \ \text{for all }\ \ \, X\in\so(3),\\
	0&\leq\langle \widetilde{\C}_{\rm micro}. X,X\rangle  \quad \ \ \ \  \ \text{for all }\ \ X\in\Sym(3),\notag
	\end{align}
	which means that  we do not exclude the possibility that these constitutive tensors  vanish.
}

 {Since the media under consideration is  homogeneous,  the quadratic forms defined by the fourth order elasticity tensors ${\C}_e$, ${\C}_{\rm c}$, ${\C}_{\rm micro}$ and ${\mathbb{L}}_{\rm aniso}$ and by the tensors $\widetilde{\C}_{\rm c}$ and  $\widetilde{\C}_{\rm micro}$, respectively, are bounded,} i.e. 
there exist   ${\mu_{\rm e}^M}\geq 0$, ${\mu_{\rm c}^M}\geq 0$,  $	\mu_{\rm micro}^M\geq 0$, $L_{\rm c}^M\geq 0$, ${\widetilde{\mu}_{\rm c}^M}\geq 0$ and ${\widetilde{\mu}_{\rm micro}^M}\geq 0$ such that
\begin{align}\label{posdef0}
\langle\,\mathbb{C}_{\rm e}. X,&X\rangle\leq {\mu_{\rm e}^M}\|X \|^2\,\quad\quad  \notag\ \ \text{for all }\ \ X\in\Sym(3),
\\
\langle\,{\C}_{\rm c}. X,&X\rangle\leq {\mu_{\rm c}^M}\|X \|^2\,\quad \ \ \ \ \  \text{for all }\ \ X\in\so(3),\notag\\
\langle {\mathbb{L}}_{\rm aniso}. X,&X\rangle\leq L_c^M\| X\|^2\,\quad \quad\  \ \text{for all }\ \ X\in\mathbb{R}^{3\times 3},\\
\langle {\C}_{\rm micro}. X,&X\rangle\leq 	\mu_{\rm micro}^M\| X\|^2\, \quad \ \  \text{for all }\ \ X\in\Sym(3),\notag\\
\langle\,\widetilde{\C}_{\rm c}. X,&X\rangle\leq {\widetilde{\mu}_{\rm c}^M}\,\|X\|^2\qquad\ \,  \ \text{for all }\ \ \, X\in\so(3),\notag\\
\langle \widetilde{\C}_{\rm micro}. X,&X\rangle\leq {\widetilde{\mu}_{\rm micro}^M}\,\|X\|^2 \quad \ \ \text{for all }\ \ X\in\Sym(3).\notag
\end{align}
In the rest of this paper we denote tensors having this property as bounded tensors. In our assumptions  the elastic constitutive tensors  may  be negative definite or  positive semi-definite, or they may even vanish.

 {In the next subsection, we will prove that the above model is well-posed. }

 \subsection{Existence of the solution}

 We introduce two bilinear forms
 $\mathcal{W}_1,\mathcal{W}_2:({H}_0^1(\Omega)
 \times{H}_0(\Curl;\Omega))\times ({H}_0^1(\Omega)
 \times{H}_0(\Curl;\Omega))\rightarrow\mathbb{R}$
 \begin{align}
 \mathcal{W}_1((u,P),(\varphi,\Phi))
 &=\dd\int _\Omega\biggl(\rho\,\langle u,\varphi\rangle+J\,\langle P,\Phi\rangle+\langle \widetilde{\mathbb{C}}_{\rm e}. \sym(\nabla u-P), \sym(\nabla \varphi-\Phi)\rangle,\notag\\&\qquad\quad  +\langle \widetilde{\mathbb{C}}_{\rm c}. \skew(\nabla u-P), \skew(\nabla \varphi-\Phi)\rangle +\langle \widetilde{\mathbb{C}}_{\rm micro}. \sym P, \sym \Phi\rangle\notag \\&\qquad \quad +{\mu}\,L_{c}^{2}\,\langle \widetilde{\mathbb{L}}_{\rm aniso}. \Curl\, P, \Curl\, \Phi\rangle\biggl)dv,\\
 \mathcal{W}_2((u,P),(\varphi,\Phi))
 &=\dd\int _\Omega\biggl(\langle \mathbb{C}_{\rm e}. \sym(\nabla u-P), \sym(\nabla \varphi-\Phi)\rangle+\langle \mathbb{C}_{\rm c}. \skew(\nabla u-P), \skew(\nabla \varphi-\Phi)\rangle\,\notag\\&\qquad \quad +\langle \mathbb{C}_{\rm micro}. \sym P, \sym \Phi\rangle +{\mu}\,L_{c}^{2}\,\langle \mathbb{L}_{\rm aniso}. \Curl\, P, \Curl\, \Phi\rangle\biggl)dv\, \notag
 \end{align}
 and for each  $(f,M)\in H^{-1}(\Omega)\times (H_0(\curl;\Omega))^*$ we consider the linear operator  $l^{(f,M)}\in H^{-1}(\Omega)\times (H_0(\curl;\Omega))^*$ defined by
\begin{align*}
l^{(f,M)}:{H}_0^1(\Omega)
\times{H}_0(\Curl;\Omega)\rightarrow\mathbb{R}, \quad  l^{(f,M)}(\varphi,\Phi)=\langle f, \varphi\rangle_{H^{-1}(\Omega), H_0^1(\Omega)}+\langle M, \Phi\rangle_{(H_0(\curl;\Omega))^*,H_0(\curl;\Omega)},
 \end{align*} 
 where $H^{-1}(\Omega)$ and $(H_0(\curl;\Omega))^*$ are the dual spaces of  $H^1_0(\Omega)$ and $H_0(\curl;\Omega)$, respectively. We equip the product space ${H}_0^1(\Omega)
 \times{H}_0(\Curl;\Omega)$ with the norm
 $$
 \|(u,P)\|_{{H}_0^1(\Omega)
 	\times{H}_0(\Curl;\Omega)}=\left(\|u\|^2_{{H}_0^1(\Omega)
 	}+\|P\|^2_{{H}_0(\Curl;\Omega)}\right)^{1/2}.
 $$

 For every $(f,M)\in C([0,T];H^{-1}(\Omega)\times (H_0(\curl;\Omega))^*$, the pair $(u,P)\in C^2([0,T];H_0^1(\Omega)\times H_0(\curl;\Omega))$ is a weak solution of the problem \eqref{eqrelax}--\eqref{ic}  provided 
   {\begin{align}\label{wformulation}
 \mathcal{W}_1((u_{,tt}(t),P_{,tt}(t)),(\varphi
,\Phi))+\mathcal{W}_2((u(t),P(t)),(\varphi
,\Phi))= l^{(f(t),M(t))}(\varphi,\Phi)
 \end{align}}
 for each $(\varphi, \Phi)\in H_0^1(\Omega)\times H_0(\curl;\Omega)$  and for all $t\in [0,T]$,
 and it satisfies\begin{align}\label{icw}
 &{u}({x},0)={u}_0(
 x),\quad \quad \quad{u}_{,t}({x},0)=\underline{u}_0(
 x),\quad \quad \quad
 {P}({x},0)={P}_0(
 x), \quad \quad \quad{P}_{,t}({x},0)=\underline{P}_0(
 x),\ \ \text{\ \ }{x}\in {\Omega}.
 \end{align}%
 
 The proof that any classical solution is a weak solution follows using analogue calculations as in \cite{NeffGhibaMadeoLazar}.
 
 In order to prove the existence and uniqueness of a weak solution of  problem \eqref{eqrelax}, we follow the strategy  proposed in \cite{chelminski2016renormalized} and we first prove the following lemma.
  \begin{lemma}
   {	Assume that
 	\begin{itemize}
  \item[i)] the constitutive tensors satisfy the symmetry relations \eqref{simetries}, respectively,
  \item[ii)]  	$\mathbb{C}_{\rm e}, \mathbb{C}_{\rm micro},\mathbb{C}_{\rm c}$ and $\mathbb{L}_{\rm aniso}$ are bounded tensors,
  \item[iii)]  $\widetilde{\mathbb{C}}_{\rm e}$ and $ \widetilde{\mathbb{L}}_{\rm aniso}$ are positive definite tensors,
  \item[iv)]
 	$\widetilde{\mathbb{C}}_{\rm micro}$ and $\widetilde{\mathbb{C}}_{\rm c}$ are positive semi-definite tensors,
 		\item[v)] 
 	$(f,M)\in C([0,T];H^{-1}(\Omega)\times(H_0({\rm Curl};\Omega))^*)$,   
 	\item[vi)]  $(u_0,P_0),$ $ (\underline{u}_0,\underline{P}_0)\in H_0^1(\Omega)\times H_0({\rm Curl};\Omega)$,
 	\item[vii)]    $\rho, J, L_c,\mu>0$,
 \end{itemize}
then,  for all $(v,Q)\in C([0,T];H_0^1(\Omega)\times H_0({\rm Curl};\Omega))$, there exists a unique   function $(u,P)\in C^2([0,T];H_0^1(\Omega)\times H_0({\rm Curl};\Omega))$ such that
 	\begin{align}\label{exsis10}
 	\mathcal{W}_1((u_{,tt}(t),P_{,tt}(t)),(\varphi
 	,\Phi))=-\mathcal{W}_2((v(t),Q(t)),(\varphi
 	,\Phi))+ l^{(f(t),M(t))}(\varphi,\Phi),
 	\end{align}
 	for all $(\varphi, \Phi)\in H_0^1(\Omega)\times H_0({\rm Curl};\Omega)$  and for all $t\in [0,T]$,
 	and it satisfies\begin{align}\label{icw1}
 	&{u}({x},0)={u}_0(
 	x),\quad \quad \quad{u}_{,t}({x},0)=\underline{u}_0(
 	x),\quad \quad \quad
 	{P}({x},0)={P}_0(
 	x), \quad \quad \quad{P}_{,t}({x},0)=\underline{P}_0(
 	x),\ \ \text{\ \ }{x}\in {\Omega}.
 	\end{align}}
 \end{lemma}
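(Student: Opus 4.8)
I would read \eqref{exsis10} as an abstract linear second-order ODE for $z(t):=(u(t),P(t))$ with values in the Hilbert space $\mathcal{H}:=H_0^1(\Omega)\times H_0(\Curl;\Omega)$, schematically of the form $\mathcal{W}_1(z_{,tt}(t),\cdot\,)=G(t)$ in $\mathcal{H}^*$, and solve it by inverting $\mathcal{W}_1$ pointwise in $t$ via Lax--Milgram and integrating twice in time against the initial data \eqref{icw1}. The crucial preliminary fact is that $\mathcal{W}_1$ is a bounded, symmetric and coercive bilinear form on $\mathcal{H}$. Symmetry follows from \eqref{simetries}; boundedness follows because $\widetilde{\mathbb{C}}_{\rm e},\widetilde{\mathbb{L}}_{\rm aniso}$ are bounded above by \eqref{posdef} and $\widetilde{\mathbb{C}}_{\rm c},\widetilde{\mathbb{C}}_{\rm micro}$ by \eqref{posdef0}, so every term is estimated by $C\,\|(u,P)\|_{\mathcal{H}}\,\|(\varphi,\Phi)\|_{\mathcal{H}}$. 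For coercivity I would discard the nonnegative $\widetilde{\mathbb{C}}_{\rm c}$- and $\widetilde{\mathbb{C}}_{\rm micro}$-contributions (assumption iv)) and use the lower bounds of \eqref{posdef} together with $\rho,J>0$ to get
\[
\mathcal{W}_1((u,P),(u,P))\ \ge\ \rho\,\|u\|_{L^2(\Omega)}^2+J\,\|P\|_{L^2(\Omega)}^2+\widetilde{\mu}_{\rm e}^m\,\|\sym(\nabla u-P)\|_{L^2(\Omega)}^2+\mu L_c^2\,\widetilde{L}_c^m\,\|\Curl P\|_{L^2(\Omega)}^2 .
\]
The last two summands control $\|P\|_{H_0(\Curl;\Omega)}^2$; for the displacement I would write $\sym\nabla u=\sym(\nabla u-P)+\sym P$, estimate $\|\sym\nabla u\|_{L^2(\Omega)}\le\|\sym(\nabla u-P)\|_{L^2(\Omega)}+\|P\|_{L^2(\Omega)}$, and invoke the Korn inequality $\|\nabla u\|_{L^2(\Omega)}\le c_K\|\sym\nabla u\|_{L^2(\Omega)}$ valid on $H_0^1(\Omega)$; together with the $\rho\|u\|_{L^2(\Omega)}^2$ term this yields $\mathcal{W}_1((u,P),(u,P))\ge c\,\|(u,P)\|_{\mathcal{H}}^2$ for some $c>0$ depending only on $\rho,J,\mu,L_c,\widetilde{\mu}_{\rm e}^m,\widetilde{L}_c^m$ and the Korn constant.

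\textbf{Pointwise inversion and continuity in time.} I would fix $t\in[0,T]$. Since $\mathbb{C}_{\rm e},\mathbb{C}_{\rm c},\mathbb{C}_{\rm micro},\mathbb{L}_{\rm aniso}$ are bounded (assumption ii)) and $(f(t),M(t))\in\mathcal{H}^*$ (assumption v)), the functional $(\varphi,\Phi)\mapsto-\mathcal{W}_2((v(t),Q(t)),(\varphi,\Phi))+l^{(f(t),M(t))}(\varphi,\Phi)$ is bounded and linear on $\mathcal{H}$, of norm at most $C\big(\|(v(t),Q(t))\|_{\mathcal{H}}+\|(f(t),M(t))\|_{\mathcal{H}^*}\big)$. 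By coercivity and the Lax--Milgram theorem there is a unique $(a(t),B(t))\in\mathcal{H}$ representing it through $\mathcal{W}_1$, i.e. \eqref{exsis10} holds with $(u_{,tt}(t),P_{,tt}(t))$ replaced by $(a(t),B(t))$. Denoting by $\mathcal{W}_1^{-1}:\mathcal{H}^*\to\mathcal{H}$ the associated bounded linear inverse, one has $(a(t),B(t))=\mathcal{W}_1^{-1}\big(-\mathcal{W}_2((v(t),Q(t)),\,\cdot\,)+l^{(f(t),M(t))}\big)$; as $\mathcal{W}_2$ is a fixed bounded bilinear form and $(f,M)\mapsto l^{(f,M)}$ is bounded linear, and since $(v,Q)\in C([0,T];\mathcal{H})$ and $(f,M)\in C([0,T];\mathcal{H}^*)$, I conclude $t\mapsto(a(t),B(t))$ belongs to $C([0,T];\mathcal{H})$.

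\textbf{Integration in time and uniqueness.} I would then set
\[
u(t):=u_0+t\,\underline{u}_0+\int_0^t(t-s)\,a(s)\,{\rm d}s,\qquad P(t):=P_0+t\,\underline{P}_0+\int_0^t(t-s)\,B(s)\,{\rm d}s .
\]
Because $(u_0,P_0),(\underline{u}_0,\underline{P}_0)\in\mathcal{H}$ (assumption vi)) and $(a,B)\in C([0,T];\mathcal{H})$, this gives $(u,P)\in C^2([0,T];\mathcal{H})$ with $u_{,tt}=a$, $P_{,tt}=B$, and the initial conditions \eqref{icw1} are satisfied by construction; by the defining property of $(a,B)$, \eqref{exsis10} holds for all $(\varphi,\Phi)\in\mathcal{H}$ and all $t\in[0,T]$. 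For uniqueness, the difference $(w,R)$ of two such solutions satisfies $\mathcal{W}_1((w_{,tt}(t),R_{,tt}(t)),(\varphi,\Phi))=0$ for every $(\varphi,\Phi)\in\mathcal{H}$, hence coercivity forces $w_{,tt}(t)=0$ and $R_{,tt}(t)=0$ for all $t$; with vanishing initial data this gives $(w,R)\equiv(0,0)$.

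\textbf{Main obstacle.} Everything after the coercivity step is the standard Lax--Milgram-plus-double-integration scheme, so the single nonroutine point is the coercivity estimate for $\mathcal{W}_1$: the form controls only $\|u\|_{L^2(\Omega)}$ and $\|\sym(\nabla u-P)\|_{L^2(\Omega)}$ in the displacement slot (the $\widetilde{\mathbb{C}}_{\rm c}$-contribution is unavailable since $\widetilde{\mathbb{C}}_{\rm c}$ may vanish by iv)), so recovering the full $H_0^1(\Omega)$-norm of $u$ genuinely requires the splitting $\sym\nabla u=\sym(\nabla u-P)+\sym P$ combined with Korn's inequality on $H_0^1(\Omega)$ and the $L^2(\Omega)$-bound on $P$ furnished by $J>0$.
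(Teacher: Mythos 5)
Your proposal is correct and follows essentially the same route as the paper: bound $\mathcal{W}_2$ using \eqref{posdef0}, establish coercivity of $\mathcal{W}_1$ by dropping the positive semi-definite terms, controlling $\|\sym P\|$ by $J\|P\|^2$, splitting $\sym\nabla u=\sym(\nabla u-P)+\sym P$, and invoking Korn, then apply Lax--Milgram pointwise in $t$, verify continuity of the resulting map $t\mapsto(u^*(t),P^*(t))$, and integrate twice against the initial data. The only cosmetic differences are that you package the time-continuity argument through the bounded inverse $\mathcal{W}_1^{-1}$ rather than estimating differences at two times, and you write the double integral in Cauchy form $\int_0^t(t-s)(\cdot)\,ds$; also note the slip where you say ``the last two summands control $\|P\|_{H_0(\Curl;\Omega)}^2$'' --- it is $J\|P\|^2$ together with $\mu L_c^2\widetilde{L}_c^m\|\Curl P\|^2$ that do so, not the $\sym(\nabla u-P)$ term, though your subsequent use of $\|P\|_{L^2}$ shows you had the right combination in mind.
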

 
 \begin{proof}
 
 Let us consider a fixed time $t\in [0,T]$.	 From the Cauchy-Schwarz inequality and Poincar\'e inequality, and since the constitutive coefficients satisfy \eqref{posdef0}, we find that for fixed $(w,R)\in H_0^1(\Omega)\times H_0({\rm Curl};\Omega)$ the map $ \mathcal{W}_2((w,R),\cdot):H_0^1(\Omega)\times H_0(\curl;\Omega)\to \mathbb{R}$ is bounded, since 
 	\begin{align}
 	&\|\sym(\nabla   \varphi-\Phi)\|^2\leq 2\left( \|\sym \nabla   \varphi\|^2+\|\sym \Phi\|^2\right),\qquad 
 	\|\sym \nabla   \varphi\|^2\leq \|\nabla   \varphi\|^2,\quad\quad
 	\|\sym \Phi\|^2\leq \|\Phi\|^2,\\
 	&  {\|\skew(\nabla   \varphi-\Phi)\|^2\leq 2\left( \|\skew \nabla   \varphi\|^2+\|\skew \Phi\|^2\right)},\ \ \ \ 
 	\|\skew \nabla   \varphi\|^2\leq \|\nabla   \varphi\|^2,\ \ \ \ \ \,
 	\|\skew \Phi\|^2\leq \|\Phi\|^2,\notag
 	\end{align}
 	for all $  \varphi\in H_0^1(\Omega)$ and for all $\Phi\in L^2(\Omega)$. 
 	Similarly  it follows that $\mathcal{W}_1$ is bounded.
 	On the other hand
 	\begin{align}
 	\mathcal{W}_1((  \varphi,\Phi),(  \varphi,\Phi))
 	&=\dd\int _\Omega\biggl(\rho\,\|   \varphi\|^2+J\,\| \Phi\|^2+\langle \widetilde{\mathbb{C}}_{\rm e}. \sym(\nabla   \varphi-\Phi), \sym(\nabla {  \varphi}-{\Phi})\rangle\,\notag\\&\qquad\quad   +\langle \widetilde{\mathbb{C}}_{\rm c}. \skew(\nabla   \varphi-\Phi), \skew(\nabla {  \varphi}-{\Phi})\rangle+\langle \widetilde{\mathbb{C}}_{\rm micro}. \sym \Phi, \sym {\Phi}\rangle\\&\qquad\quad  +{\mu}\,L_{c}^{2}\,\langle \widetilde{\mathbb{L}}_{\rm aniso}. \Curl\, \Phi, \Curl\, {\Phi}\rangle\!\biggl) dv\,\notag\\
 	&  { \geq \!\dd\int _\Omega\!\biggl(\!\rho\,\|   \varphi\|^2+J\,\| \Phi\|^2+\langle \widetilde{\mathbb{C}}_{\rm e}. \sym(\nabla   \varphi-\Phi), \sym(\nabla {  \varphi}-{\Phi})\rangle+{\mu}\,L_{c}^{2}\,\langle \widetilde{\mathbb{L}}_{\rm aniso}. \Curl\, \Phi, \Curl\, {\Phi}\rangle\biggl)\, dv}\notag
 	\\
 	&  {\geq\! \dd\int _\Omega\!\biggl(\!\rho\,\|   \varphi\|^2+\frac{J}{2}\,\| \Phi\|^2+\frac{J}{2}\,\| \sym\,\Phi\|^2+\widetilde{\mu}_{\rm e}^m\|\sym(\nabla   \varphi-\Phi)\|^2+{\mu}\,L_{c}^{2}\,\widetilde{L}_{\rm c}^m \|\Curl\, \Phi\|^2\!\biggl) dv}\notag
 	\end{align}
 	for all
 	$({  \varphi},{\Phi})\in H_0^1(\Omega)\times H_0({\rm Curl};\Omega)$, since, if it is present, the tensor   {$\widetilde{\mathbb{C}}_{\rm c}$} and   {$\widetilde{\mathbb{C}}_{\rm micro}$} are positive semi-definite,  while $\widetilde{\mathbb{L}}_{\rm aniso}$ and $\widetilde{\mathbb{C}}_{\rm c}$ are positive definite.

  {Since }
 \begin{align*}
 \| \sym \nabla   \varphi\|^2\leq \| \sym (\nabla   \varphi-\Phi)\|^2+\|\sym \Phi\|^2
 \end{align*}
 for all $  \varphi\in {\rm H}^1(\Omega)$ and $\Phi\in L^2(\Omega)$, we have that there exists  $c>0$ such that
 	\begin{align*}
 \mathcal{W}_1((  \varphi,\Phi),(  \varphi,\Phi))
 &\geq c\,\dd\int _\Omega\biggl(\|   \varphi\|^2+\| \Phi\|^2+  {\| \sym \nabla   \varphi\|^2}+\|\Curl\, \Phi\|^2\biggl)\, dv\\\
 &\geq c\,\dd\int _\Omega\biggl(\| \sym \nabla   \varphi\|^2+\| \Phi\|^2+\|\Curl\, \Phi\|^2\biggl)\, dv
 \end{align*}
 for all
 $(  \varphi,\Phi)\in H_0^1(\Omega)\times H_0(\curl;\Omega)$.
  Hence, the coercivity of 	 $\mathcal{W}_1$ is assured by the coercivity of the quadratic form from the right-hand side of the above inequality, which is a direct consequence of  the classical Korn inequality \cite{Neff00b}. 
  
 	Since the linear operator  
 	$l^{(f(t),M(t))}$  is bounded for a fixed time $t$ considered in the beginning of the proof,  using the Lax-Milgram theorem we obtain the existence  of a unique solution $(u^*(t),P^*(t))\in H_0^1(\Omega)\times H_0(\curl;\Omega)$ of the equation
 	\begin{align}\label{exsis11}
 	\mathcal{W}_1((u^*(t),P^*(t)),(\varphi
 	,\Phi))=  {-}\mathcal{W}_2((v(t),Q(t)),(\varphi
 	,\Phi))+ l^{(f(t),M(t))}(\varphi,\Phi)
 	\end{align}
 	for all $(\varphi, \Phi)\in H_0^1(\Omega)\times H_0(\curl;\Omega)$.
 From standard arguments it follows that if $(f,M)\in C([0,T];H^{-1}(\Omega)\times (H_0(\curl;\Omega))^*)$ and  $(v,Q)\in C([0,T];H_0^1(\Omega)\times H_0(\curl;\Omega)) $, then the solution $(u^*,P^*)$ of \eqref{exsis11} belongs to $C([0,T];H_0^1(\Omega)\times H_0(\curl;\Omega))$. Indeed, the difference of the solutions of \eqref{exsis11} which correspond to two times $t_1, t_2\in [0,T]$ satisfies
 \begin{align} \mathcal{W}_1((u^*(t_1)-u^*(t_2),&P^*(t_1)-P^*(t_2)),(\varphi
 ,\Phi))=  {-}\mathcal{W}_2((v(t_1)-v(t_2),Q(t_1)-Q(t_2)),(\varphi
 ,\Phi))\notag\\&+ \langle f(t_1)-f(t_2), \varphi\rangle_{H^{-1}(\Omega), H_0^1(\Omega)}+\langle M(t_1)-M(t_2), \Phi\rangle_{H_0^*(\curl;\Omega),H_0(\curl;\Omega)} 
 \end{align}
 for all $(\varphi, \Phi)\in H_0^1(\Omega)\times H_0(\curl;\Omega)$.
Using  the coercivity of $\mathcal{W}_1$ and the boundedness of $\mathcal{W}_2$  we obtain that 
\begin{align}
\|u^*(t_1)-u^*(t_2)\|_{H_0^1(\Omega)}^2&+\|P^*(t_1)-P^*(t_2)\|_{H_0({\rm Curl};\Omega)}^2\notag\\&\leq c\,(\|u^*(t_1)-u^*(t_2)\|_{H_0^1(\Omega)}^2+\|P^*(t_1)-P^*(t_2)\|_{H_0({\rm Curl};\Omega)}^2)^{1/2}\\
&\qquad [ (\|v(t_1)-v(t_2)\|_{H_0^1(\Omega)}^2+\|Q(t_1)-Q(t_2)\|_{H_0({\rm Curl};\Omega)}^2)^{1/2}\notag\\&\qquad \quad+(\|f(t_1)-f(t_2)\|_{H^{-1}(\Omega)}^2+\|M(t_1)-M(t_2)\|_{H^*(\curl;\Omega)}^2 )^{1/2}],\notag
\end{align}
where $c$ is a positive constant. Hence,  there is a positive constant $c>0$ such that
  {\begin{align*}
&\|u^*(t_1)-u^*(t_2)\|_{H_0^1(\Omega)}^2+\|P^*(t_1)-P^*(t_2)\|_{H_0({\rm Curl};\Omega)}^2\\&\leq c\left[ \|v(t_1)-v(t_2)\|_{H_0^1(\Omega)}^2+\|Q(t_1)-Q(t_2)\|_{H_0({\rm Curl};\Omega)}^2+\|f(t_1)-f(t_2)\|_{H^{-1}(\Omega)}^2+\|M(t_1)-M(t_2)\|_{H^*(\curl;\Omega)}^2\right] ,\notag
\end{align*}}
which proves the continuity.

 Now,  the unique solution $(u,P)\in C^2([0,T];H_0^1(\Omega)\times H_0(\curl;\Omega))$ of the problem defined by \eqref{exsis10} and \eqref{icw1} will be
 \begin{align}\label{ustar}
 u(t)=u_0+\int_0^t\left(\underline{u}_0+\int_0^s u^*(\xi) d\,\xi\right) ds,\qquad  P(t)=P_0+\int_0^t\left(\underline{P}_0+\int_0^s P^*(\xi) \,d\xi\right) ds
 \end{align}
 for all $t\in[0,T]$ and the proof is complete.
\end{proof}

 \begin{theorem}\label{theoremexist}
  { Assume that
 	\begin{itemize}
 		\item[i)] the constitutive tensors satisfy the symmetry relations \eqref{simetries}, respectively,
 		\item[ii)]  	$\mathbb{C}_{\rm e}, \mathbb{C}_{\rm micro},\mathbb{C}_{\rm c}$ and $\mathbb{L}_{\rm aniso}$ are bounded tensors,
 		\item[iii)]  $\widetilde{\mathbb{C}}_{\rm e}$ and $ \widetilde{\mathbb{L}}_{\rm aniso}$ are positive definite tensors,
 		\item[iv)]
 		$\widetilde{\mathbb{C}}_{\rm micro}$ and $\widetilde{\mathbb{C}}_{\rm c}$ are positive semi-definite tensors,
 		\item[v)] 
 		$(f,M)\in C([0,T];H^{-1}(\Omega)\times(H_0({\rm Curl};\Omega))^*)$,   
 		\item[vi)]  $(u_0,P_0),$ $ (\underline{u}_0,\underline{P}_0)\in H_0^1(\Omega)\times H_0({\rm Curl};\Omega)$,
 		\item[vii)]    $\rho, J, L_c,\mu>0$,
 	\end{itemize}
  then there exists a unique   solution $(u,P)\in C^2([0,T];H_0^1(\Omega)\times H_0({\rm Curl};\Omega))$ of the problem defined by \eqref{wformulation} and \eqref{icw}.}
 \end{theorem}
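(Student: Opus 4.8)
The plan is to recast the weak problem \eqref{wformulation}--\eqref{icw} as a fixed-point equation for the solution operator supplied by the Lemma, and to invoke the Banach fixed-point theorem, following the strategy of \cite{chelminski2016renormalized}. First I introduce the operator $\mathcal{T}$ that sends a datum $(v,Q)\in\mathcal{X}:=C([0,T];H_0^1(\Omega)\times H_0(\Curl;\Omega))$ to the unique pair $(u,P)\in C^2([0,T];H_0^1(\Omega)\times H_0(\Curl;\Omega))$ furnished by the Lemma, i.e. the solution of \eqref{exsis10}--\eqref{icw1}. Since $C^2([0,T];\,\cdot\,)\subset\mathcal{X}$, the operator $\mathcal{T}$ is a self-map of $\mathcal{X}$. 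A pair $(u,P)\in\mathcal{X}$ is a fixed point of $\mathcal{T}$ precisely when it solves \eqref{exsis10} with the choice $(v,Q)=(u,P)$; transferring the $\mathcal{W}_2$-term to the left-hand side, this is exactly the weak formulation \eqref{wformulation}, and the initial conditions \eqref{icw} are built into $\mathcal{T}$ through \eqref{icw1}. Hence it suffices to show that $\mathcal{T}$ has a unique fixed point.

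Next I estimate the increments of $\mathcal{T}$. For two data $(v_1,Q_1),(v_2,Q_2)\in\mathcal{X}$ write $(u_i,P_i)=\mathcal{T}(v_i,Q_i)$ and let $(u_i^*,P_i^*)$ be the associated solutions of \eqref{exsis11}, so that by \eqref{ustar}, and since the common initial data and external load cancel,
\begin{align*}
(u_1-u_2)(t)=\int_0^t\!\int_0^s (u_1^*-u_2^*)(\xi)\,d\xi\,ds,\qquad (P_1-P_2)(t)=\int_0^t\!\int_0^s (P_1^*-P_2^*)(\xi)\,d\xi\,ds.
\end{align*}
Subtracting the two instances of \eqref{exsis11} at a fixed time $t$, testing with $(\varphi,\Phi)=\big((u_1^*-u_2^*)(t),(P_1^*-P_2^*)(t)\big)$, and using the coercivity of $\mathcal{W}_1$ and the boundedness of $\mathcal{W}_2$ established in the proof of the Lemma (the $l^{(f(t),M(t))}$-terms drop out since $f,M$ are common), one obtains a pointwise bound
\begin{align*}
\big\|\big((u_1^*-u_2^*)(t),(P_1^*-P_2^*)(t)\big)\big\|_{H_0^1(\Omega)\times H_0(\Curl;\Omega)}\le c_0\,\big\|\big((v_1-v_2)(t),(Q_1-Q_2)(t)\big)\big\|_{H_0^1(\Omega)\times H_0(\Curl;\Omega)}
\end{align*}
with $c_0>0$ (the ratio of the boundedness constant of $\mathcal{W}_2$ to the coercivity constant of $\mathcal{W}_1$) independent of $t$.

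Equipping $\mathcal{X}$ with the Bielecki-type norm $\|w\|_\lambda:=\sup_{t\in[0,T]}e^{-\lambda t}\|w(t)\|_{H_0^1(\Omega)\times H_0(\Curl;\Omega)}$, which for each fixed $\lambda>0$ is equivalent to the usual sup-norm and hence makes $\mathcal{X}$ a Banach space, inserting the pointwise bound into the double integral, and using $\int_0^t\!\int_0^s e^{\lambda\xi}\,d\xi\,ds\le\lambda^{-2}e^{\lambda t}$, I obtain
\begin{align*}
\big\|\mathcal{T}(v_1,Q_1)-\mathcal{T}(v_2,Q_2)\big\|_\lambda\le\frac{c_0}{\lambda^2}\,\big\|(v_1,Q_1)-(v_2,Q_2)\big\|_\lambda.
\end{align*}
Choosing $\lambda$ so large that $c_0/\lambda^2<1$ turns $\mathcal{T}$ into a contraction on $(\mathcal{X},\|\cdot\|_\lambda)$, so it has a unique fixed point $(u,P)\in\mathcal{X}$, which by the Lemma in fact lies in $C^2([0,T];H_0^1(\Omega)\times H_0(\Curl;\Omega))$ and solves \eqref{wformulation}--\eqref{icw}; uniqueness in this class is immediate, since any weak solution is a fixed point of $\mathcal{T}$.

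As for difficulty, the analytic substance is carried by the Lemma (Lax--Milgram for the bounded coercive form $\mathcal{W}_1$, whose coercivity reduces to the classical Korn inequality, together with the temporal continuity of the auxiliary solution). The only remaining point requiring care is making $\mathcal{T}$ a \emph{global} contraction on $[0,T]$: a naive sup-norm estimate only yields a contraction for small $T$, because the double time-integration in \eqref{ustar} contributes a factor $\sim c_0T^2/2$, so one either uses the weighted norm above or, equivalently, solves first on a short subinterval $[0,T_0]$ with $c_0T_0^2/2<1$ and then concatenates finitely many such steps, carrying the terminal values of $(u,P)$ and $(u_{,t},P_{,t})$ over as new initial data. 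One should also verify that the increment $t\mapsto\big((v_1-v_2)(t),(Q_1-Q_2)(t)\big)$ enters the estimates only through the product-space norm and not through any unbounded quantity, which is precisely the boundedness of $\mathcal{W}_2$ recorded in the proof of the Lemma.
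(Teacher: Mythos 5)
Your proof is correct, and it takes the alternative route that the authors themselves mention in Remark~2.3(i) but do not carry out: instead of choosing a short interval $[0,\delta]$ with $\delta^2 c<1$, solving there, and then gluing finitely many such local solutions to cover $[0,T]$ (which is what the paper actually does, and which requires the small additional verification that $(u_{,tt},P_{,tt})$ is continuous across the joint times), you work directly on the full interval with a Bielecki-type exponential weight $\|w\|_\lambda=\sup_{t}e^{-\lambda t}\|w(t)\|$. Your computation $\int_0^t\int_0^s e^{\lambda\xi}\,d\xi\,ds\le \lambda^{-2}e^{\lambda t}$ correctly converts the double time-integration from \eqref{ustar} into the factor $c_0/\lambda^2$, and choosing $\lambda>\sqrt{c_0}$ yields a global contraction in one step. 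The analytic core in both versions is identical — the pointwise Lipschitz estimate $\|(u_1^*-u_2^*)(t),(P_1^*-P_2^*)(t)\|\le c_0\,\|(v_1-v_2)(t),(Q_1-Q_2)(t)\|$ supplied by the coercivity of $\mathcal{W}_1$ and the boundedness of $\mathcal{W}_2$, both established in the Lemma. What your version buys is the elimination of the concatenation step (and the explicit reliance on the constant $c$ being independent of the initial data, which the paper must invoke to iterate the short-time argument); what the paper's version buys is a slightly more hands-on presentation that avoids introducing an equivalent norm. Both are valid, and you correctly note they are equivalent in spirit, so the proposal is sound.
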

\begin{proof}
To prove this theorem, we will use the Banach fixed-point theorem for the  mapping $$\mathcal{L}:C([0,\delta];H_0^1(\Omega)\times H_0(\curl;\Omega))
\to C^2([0,\delta];H_0^1(\Omega)\times H_0(\curl;\Omega))\subset  C([0,\delta];H_0^1(\Omega)\times H_0(\curl;\Omega))$$ which, for fixed $(f,M)\in C([0,\delta];H^{-1}(\Omega)\times (H_0({\rm Curl};\Omega))^*)$, $(u_0,P_0),$ $ (\underline{u}_0,\underline{P}_0)\in H_0^1(\Omega)\times H_0(\curl;\Omega)$, maps
each $(v,Q)\in C([0,\delta];H_0^1(\Omega)\times H_0(\curl;\Omega))$ to the solution of the corresponding problem defined by \eqref{exsis10} and \eqref{icw1}, where $\delta>0$ will be suitably chosen. 

Let us consider $(v^{(1)},Q^{(1)}),(v^{(2)},Q^{(2)})\in C([0,\delta];H_0^1(\Omega)\times H_0({\rm Curl};\Omega))$ and their corresponding solutions $(u^{(1)},P^{(1)})$, $(u^{(2)},P^{(2)})\in C^2([0,\delta];H_0^1(\Omega)\times H_0({\rm Curl};\Omega))$ of the related problems defined by \eqref{exsis10} and \eqref{icw1}, i.e.
\begin{align*}
u^{(\alpha)}(t)=u_0+\int_0^t\left(\underline{u}_0+\int_0^s u^{(\alpha)*}(\xi) d\,\xi\right) ds,\quad  P^{(\alpha)}(t)=P_0+\int_0^t\left(\underline{P}_0+\int_0^s P^{(\alpha)*}(\xi) \,d\xi\right) ds,\quad \alpha=1,2,
\end{align*}
for all   {$t\in[0,\delta]$}, where
 $(u^{(\alpha)*},P^{(\alpha)*})\in C([0,\delta];H_0^1(\Omega)\times H_0({\rm Curl};\Omega))$ is the unique solution of the equation:
\begin{align*}
\mathcal{W}_1((u^{(\alpha)*}(t),P^{(\alpha)*}(t)),(\varphi
,\Phi))=  {-}\mathcal{W}_2((v(t),Q(t)),(\varphi
,\Phi))+ l^{(f(t),M(t))}(\varphi,\Phi)
\end{align*}
for all $(\varphi, \Phi)\in H_0^1(\Omega)\times H_0(\curl;\Omega)$ and for all   {$t\in[0,\delta]$}.

 Then, because the solutions correspond to the same initial conditions and forces, we have
\begin{align}
\max_{t\in [0,\delta]}(\|u^{(1)}(t)&-u^{(2)}(t)\|_{H_0^1(\Omega)}+\|P^{(1)}(t)-P^{(2)}(t)\|_{H_0({\rm Curl};\Omega)})\notag\\
&\leq \max_{t\in [0,\delta]}\int_0^t\int_0^s(\|u^{(1)*}(\xi)-u^{(2)*}(\xi)\|_{H_0^1(\Omega)}+\|P^{(1)*}(
\xi)-P^{(2)*}(\xi)\|_{H_0({\rm Curl};\Omega)})d\xi\, ds.
\end{align}
Since  the problem is linear, $(u^{(1)*}-u^{(2)*},P^{(1)*}-P^{(2)*})\in C([0,\delta];H_0^1(\Omega)\times H_0({\rm Curl};\Omega))	$ satisfies 
\begin{align}
\mathcal{W}_1((u^{(1)*}(t)-u^{(2)*}(t),P^{(1)*}(t)-P^{(2)*}(t)),(\varphi
,\Phi))=  {-}\mathcal{W}_2((v^{(1)}(t)-v^{(2)}(t),Q^{(1)}(t)-Q^{(2)}(t)),(\varphi
,\Phi))
\end{align}
for all $(\varphi, \Phi)\in H_0^1(\Omega)\times H_0(\curl;\Omega)$  and for all $t\in [0,\delta]$. The coercivity of $\mathcal{W}_1$ and the boundedness of $\mathcal{W}_2$ lead us to
\begin{align}
\|u^{(1)*}(t)&-u^{(2)*}(t)\|_{H_0^1(\Omega)}^2+\|P^{(1)*}(t)-P^{(2)*}(t)\|_{H_0({\rm Curl};\Omega)}^2\notag\\&\leq c\, (\|v^{(1)}(t)-v^{(2)}(t)\|_{H_0^1(\Omega)}^2+\|Q^{(1)}(t)-Q^{(2)}(t)\|_{H_0({\rm Curl};\Omega)}^2),
\end{align}
where $c$ is a positive constant. 
But
\begin{align}
\frac{1}{2}(\|u^{(1)*}(t)-u^{(2)*}(t)\|_{H_0^1(\Omega)}&+\|P^{(1)*}(t)-P^{(2)*}(t)\|_{H_0({\rm Curl};\Omega)})^2\notag\\
&\leq\|u^{(1)*}(t)-u^{(2)*}(t)\|_{H_0^1(\Omega)}^2+\|P^{(1)*}(t)-P^{(2)*}(t)\|_{H_0({\rm Curl};\Omega)}^2,
\end{align}
and therefore we deduce that 
\begin{align}
\|u^{(1)*}(t)-u^{(2)*}(t)\|_{H_0^1(\Omega)}&+\|P^{(1)*}(t)-P^{(2)*}(t)\|_{H_0({\rm Curl};\Omega)}\\&\leq \sqrt{2\, c}\, (\|v^{(1)}(t)-v^{(2)}(t)\|_{H_0^1(\Omega)}^2+\|Q^{(1)}(t)-Q^{(2)}(t)\|_{H_0({\rm Curl};\Omega)}^2)^{1/2}.\notag
\end{align}
Hence, we obtain 
\begin{align}
\max_{t\in [0,\delta]}(\|u^{(1)}(t)&-u^{(2)}(t)\|^2_{H_0^1(\Omega)}+\|P^{(1)}(t)-P^{(2)}(t)\|^2_{H_0({\rm Curl};\Omega)})^{1/2}\notag\\
&\leq\max_{t\in [0,\delta]}(\|u^{(1)}(t)-u^{(2)}(t)\|_{H_0^1(\Omega)}+\|P^{(1)}(t)-P^{(2)}(t)\|_{H_0({\rm Curl};\Omega)})\\
&\leq \sqrt{2\, c}
\,\max_{t\in [0,\delta]}\int_0^t\int_0^s (\|v^{(1)}(\xi)-v^{(2)}(\xi)\|_{H_0^1(\Omega)}^2+\|Q^{(1)}(\xi)-Q^{(2)}(\xi)\|_{H_0({\rm Curl};\Omega)}^2)^{1/2}d\xi\, ds
\notag\\
&\leq \delta^2\,\sqrt{2\, c}\, \max_{t\in [0,\delta]} (\|v^{(1)}(t)-v^{(2)}(t)\|_{H_0^1(\Omega)}^2+\|Q^{(1)}(t)-Q^{(2)}(t)\|_{H_0({\rm Curl};\Omega)}^2)^{1/2}.\notag 
\end{align}
Therefore
\begin{align}\label{constc}
\max_{t\in [0,\delta]}(\|u^{(1)}(t)&-u^{(2)}(t)\|^2_{H_0^1(\Omega)}+\|P^{(1)}(t)-P^{(2)}(t)\|^2_{H_0({\rm Curl};\Omega)})^{1/2}
\notag\\&\leq \delta^2\,c\, \max_{t\in [0,\delta]} (\|v^{(1)}(t)-v^{(2)}(t)\|_{H_0^1(\Omega)}^2+\|Q^{(1)}(t)-Q^{(2)}(t)\|_{H_0({\rm Curl};\Omega)}^2)^{1/2}, 
\end{align}
where $c$ is positive constant which is independent of time and also of the initial condition.

Hence, $\mathcal{L}$ is a contraction  on $C([0,\delta];H_0^1(\Omega)\times H_0({\rm Curl};\Omega))$ for   {$\delta=\frac{1}{2\sqrt{c}}$} with $c$ taken from the above estimate.  Hence, for   {$\delta=\frac{1}{2\sqrt{c}}$} ,
there exists a unique   function $(u,P)\in C([0,\delta];H_0^1(\Omega)\times H_0(\curl;\Omega))$ such that
\begin{align}
(u,P)=\mathcal{L}. (u,P)\in C^2([0,\delta];H_0^1(\Omega)\times H_0(\curl;\Omega)).
\end{align} Therefore, there exists a unique $(u,P)\in C^2([0,\delta];H_0^1(\Omega)\times H_0(\curl;\Omega))$ solution of the problem
\begin{align*}
\mathcal{W}_1((u_{,tt}(t),P_{,tt}(t),(\varphi
,\Phi))=  {-}\mathcal{W}_2((u(t),P(t),(\varphi
,\Phi))+ l^{(f(t),M(t)}(\varphi,\Phi)
\end{align*}
for all $(\varphi, \Phi)\in H_0^1(\Omega)\times H_0(\curl;\Omega)$  and for all $t\in [0,\delta]$,
which satisfies
\begin{align*}
&{u}({x},0)={u}_0(
x),\quad \quad \quad{u}_{,t}({x},0)=\underline{u}_0(
x),\quad \quad \quad
{P}({x},0)={P}_0(
x), \quad \quad \quad{P}_{,t}({x},0)=\underline{P}_0(
x),\ \ \text{\ \ }{x}\in {\Omega}
\end{align*}%
for all $t\in[0,\delta]$.

By repeating the above analysis, since the positive constant $c$ in \eqref{constc} does not depend on the initial data,  we  argue that 
there exists a unique solution  
$(\widetilde{u},\widetilde{P})\in C^2([\delta, 2\,\delta];H_0^1(\Omega)\times H_0(\curl;\Omega))$  of the problem
\begin{align*}
\mathcal{W}_1((\widetilde{u}_{,tt}(t),\widetilde{P}_{,tt}(t),(\varphi
,\Phi))=  {-}\mathcal{W}_2((\widetilde{u}(t),\widetilde{P}(t),(\varphi
,\Phi))+ l^{(f(t),M(t)}(\varphi,\Phi)
\end{align*}
for all $(\varphi, \Phi)\in H_0^1(\Omega)\times H_0(\curl;\Omega)$  and for all $t\in [\delta,2\,\delta]$,
which satisfies
\begin{align*}
&\widetilde{u}({x},\delta)={u}(
x, \delta),\quad \quad \quad\widetilde{u}_{,t}({x},\delta)={u}_{,t}(
x,\delta),\quad \quad \quad
\widetilde{P}({x},\delta)={P}(
x, \delta), \quad \quad \quad\widetilde{P}_{,t}({x},\delta)={P}_{,t}(
x, \delta),\ \ \text{\ \ }{x}\in {\Omega}.
\end{align*}%
Putting together the solution on $[0,\delta]$ and on $[\delta, 2\delta]$ we obtain a solution $(u,P)\in C^2([0,2\, \delta];$ $H_0^1(\Omega)\times H_0({\rm Curl};\Omega))$ of the initial problem  of the problem defined by \eqref{wformulation} and \eqref{icw}, due to the fact that $(u_{,tt}, P_{,tt})\in C([0,2\, \delta];H_0^1(\Omega)\times H_0({\rm Curl};\Omega))$.  {The continuity of $(u_{,tt}, P_{,tt})$ follows from the continuity on $[0,\delta]$ and $[\delta,2\, \delta]$, respectively, of the functions  $(u^*,P^*)$ defining the function $(u,P)$ through expressions \eqref{ustar}.}

Hence, due to similar iterations, we may extend the already constructed solution to the interval $[0,T]$ by considering a large enough step   {$n>2\, T\sqrt{c}$}, where $c$ is the constant from inequality \eqref{constc}.
\end{proof}

\begin{remark}
	\begin{itemize}
		\item[]
		\item[i)]   {We note that the application of the Banach fixed-point theorem on small intervals and to glue the fixed points together may be avoided by using an exponential weight in the maximum norm, as in the classical proof of the Picard-Lindel\"of theorem.  For a similar observation in this context and for other remarks regarding the presence of time derivatives in models one may consult \cite{Picardcerinta}.}
		\item[ii)]While in proving the existence of solution for the relaxed model without novel inertia terms (see \cite{GhibaNeffExistence,NeffGhibaMadeoLazar}) the requirement that  $\mathbb{C}_{\rm micro}$ is positive definite  was essential, in the  relaxed model which includes new  inertia terms, the existence result is still valid  when  $\widetilde{\mathbb{C}}_{\rm micro}$ is only positive semi-definite and  ${\mathbb{C}}_{\rm micro}$ is only bounded.
	\end{itemize}

\end{remark}

 \section{On the existence for a simplified model}
 \label{withoutPt}\setcounter{equation}{0}

 	The  aim of this   {section} is to investigate if the problem remains well-posed  when we take into account the following simplified expression for the   kinetic energy 
 	\begin{align*}
 	K\left(\u_{,t},\P_{,t},\nabla\u_{,t},\curl\,P_{,t}\right) & =\frac{1}{2}\left\langle \rho\,u_{,t},u_{,t}\right\rangle +\frac{1}{2}\left\langle \widetilde{\mathbb{C}}_{e}\,\sym\left(\grad\u_{,t}-P_{,t}\right),\sym\left(\grad\u_{,t}-P_{,t}\right)\right\rangle \\
 	&\quad +\frac{1}{2}  {\langle \widetilde{\mathbb{C}}_{\rm c}. \skew(\nabla u_{,t}-P_{,t}), \skew(\nabla u_{,t}-P_{,t})\rangle}+\langle \widetilde{\mathbb{C}}_{\rm micro}\,\sym\,P_{,t},\sym\,P_{,t}\rangle\\
 	&\quad  +\mu\:\frac{L_{c}^{2}}{2}\left\langle \widetilde{\mathbb{L}}_{\textrm{aniso}}\,\curl\,P_{,t},\curl\,P_{,t}\right\rangle 
 	\end{align*}
 and to the same potential energy density as in the previous subsection. A justification of a such choice is that in the limit case $P=\nabla u$, the variational formulation should be related   to the equations of the linear theory of nonlocal elasticity introduced by Eringen \cite{eringen1972linear} to fit the acoustical branch of elastic waves within the Brillouin zone in periodic one dimensional lattices \cite{brillouin2003wave}. We expect that the generalized model given in this section will improve the fitting of the dispersion  curves, at least up to a value of  the wave number compatible with the size of the microstructure of a considered microstructured material. 
 
    The equations of this model are 
 \begin{align}
 \rho\,u_{,tt}&-\dvg[ \widetilde{\C}_e. \sym(\nabla u_{,tt}-P_{,tt})+\widetilde{\C}_{\rm c}.\,{\rm skew}(\nabla u_{,tt}-P_{,tt})]=\notag\\&\ \ \ \ \dvg[ \mathbb{C}_{\rm e}. \sym(\nabla u-P)+{\C}_{\rm c}.\,{\rm skew}(\nabla u-P)]+f\, , \quad\qquad \qquad\qquad \qquad \qquad\ \ \quad \quad  
 \\
 {\mu}\,L_{c}^{2}\,\crl[ \widetilde{\mathbb{L}}_{\rm aniso}&.\crl\, P_{,tt}]-\widetilde{\C}_e. \sym (\nabla u_{,tt}-P_{,tt})-\widetilde{\C}_{\rm c}.\,{\rm skew}(\nabla u_{,tt}-P_{,tt})+\widetilde{\C}_{\rm micro}. \sym P_{,tt}=\notag\\&\quad - {\mu}\,L_{c}^{2}\,\crl[ {\mathbb{L}}_{\rm aniso}.\crl\, P]+\mathbb{C}_{\rm e}. \sym (\nabla u-P)+{\C}_{\rm c}.\,{\rm skew}(\nabla u-P)-{\C}_{\rm micro}. \sym P+M. \notag
 \end{align}
 
   {We assume that the remaining  constants of the model satisfy the conditions from the above section.}
 
  {In these hypotheses,  the corresponding bilinear form defined by the right-hand side of the system of partial differential equations, i.e.
 $\mathcal{W}_1:({H}_0^1(\Omega)
 \times{H}_0(\Curl;\Omega))\times ({H}_0^1(\Omega)
 \times{H}_0(\Curl;\Omega))\rightarrow\mathbb{R}$
 \begin{align}
 \mathcal{W}_1((u,P),(\varphi,\Phi))&=\dd\int _\Omega\biggl(\rho\,\langle u,\varphi\rangle+\langle \widetilde{\mathbb{C}}_{\rm e}. \sym(\nabla u-P), \sym(\nabla \varphi-\Phi)\rangle+\langle \widetilde{\mathbb{C}}_{\rm c}. \skew(\nabla u-P), \skew(\nabla \varphi-\Phi)\rangle\,\notag\\&\qquad \quad +\langle \widetilde{\mathbb{C}}_{\rm micro}. \sym P, \sym \Phi\rangle +\mu\, L_c^2\,\langle \widetilde{\mathbb{L}}_{\rm aniso}. \Curl\, P, \Curl\, \Phi\rangle\biggl)dv
\end{align}
remains bounded. Regarding its coercivity we point out that we have to impose, beside the conditions imposed in the last section, that 
  $\widetilde{\mathbb{C}}_{\rm micro}$ is positive definite.}

  {Under this additional assumption,   using the properties of the other constitutive tensors, we obtain that there is a $c>0$ such that
\begin{align*}
\mathcal{W}_1({w},{w})
&\geq c\,\dd\int _\Omega\biggl(\| u\|^2+\| \sym (\nabla u-P)\|^2+\|\sym P\|^2+\|\Curl\, P\|^2\biggl)\, dv
\end{align*}
for all
$  {w=(u,P)}\in H_0^1(\Omega)\times H_0(\curl;\Omega)$, which means that there is a $c>0$ such that
\begin{align*}
\mathcal{W}_1({w},{w})
&\geq c\,\dd\int _\Omega\biggl(\| \sym \nabla u\|^2+\|\sym P\|^2+\|\Curl\, P\|^2\biggl)\, dv
\end{align*}
for all
$  {w=(u,P)}\in H_0^1(\Omega)\times H_0(\curl;\Omega)$.}

Let us recall the following result  
\cite{NeffPaulyWitsch,NPW2,NPW3,BNPS2}:

\begin{theorem}\label{wdn}There exists a positive constant $C$, only depending on $\Omega$, such that for all $P\in{\rm H}_0({\rm Curl}\, ; \Omega)$ the following estimates hold:
	\begin{align*}
	{\| P\|_{H(\mathrm{Curl})}^2}:=\| P\|_{L^2(\Omega)}^{ {2}}+\| \Curl P\|_{L^2(\Omega)}^{ {2}}&\leq C\,(\| {\rm sym} P\|^2_{L^2(\Omega)}+\| \Curl P\|^2_{L^2(\Omega)}).
	\end{align*}
\end{theorem}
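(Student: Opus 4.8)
The plan is to prove this estimate --- which, after cancelling $\|\Curl P\|_{L^2}^2$ from both sides, is the incompatible Korn inequality $\|P\|_{L^2}^2\le C(\|\sym P\|_{L^2}^2+\|\Curl P\|_{L^2}^2)$ on $H_0(\Curl;\Omega)$ --- by a contradiction and compactness argument in the spirit of Weck's selection theorem. Suppose the inequality fails; then there is a sequence $(P_n)\subset H_0(\Curl;\Omega)$ with $\|P_n\|_{L^2}^2+\|\Curl P_n\|_{L^2}^2=1$ for all $n$, while $\|\sym P_n\|_{L^2}\to 0$ and $\|\Curl P_n\|_{L^2}\to 0$. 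The goal is to show that, along a subsequence, $P_n$ converges \emph{strongly} in $H(\Curl;\Omega)$ to a field $T$ which is then forced to vanish, contradicting $\|T\|_{H(\Curl)}=1$.

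The heart of the matter is to upgrade weak $L^2$-compactness of $(P_n)$ to strong $L^2$-compactness, and this is exactly where the information carried by $\sym P_n\to 0$ enters. Write the skew part of $P_n$ as $\skew P_n=\anti(v_n)$ with $v_n=\axl(\skew P_n)\in L^2(\Omega;\R^3)$, and use the algebraic identity $\Curl\anti(v)=(\dv v)\,\id-(\nabla v)^T$, valid in the distributional sense. Since $\Curl P_n=\Curl\sym P_n+\Curl\anti(v_n)$, this gives
\[ (\nabla v_n)^T=(\dv v_n)\,\id-\Curl P_n+\Curl\sym P_n\qquad\text{in }\mathcal{D}'(\Omega). \]
Taking the trace and using that $\tr\Curl(\sym P_n)=0$ for symmetric tensor fields, one obtains $\dv v_n=\tfrac12\,\tr\Curl P_n$, which lies in $L^2(\Omega)$ and tends to $0$. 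Feeding this back, the right-hand side of the displayed identity is the sum of $L^2$-terms tending to $0$ and the term $\Curl\sym P_n$, which tends to $0$ in $H^{-1}(\Omega)$ because $\sym P_n\to 0$ in $L^2$; hence $\nabla v_n\to 0$ in $H^{-1}(\Omega)$, while $(v_n)$ is bounded in $L^2$ (since $\|\anti(v_n)\|_{L^2}\le\|P_n\|_{L^2}+\|\sym P_n\|_{L^2}$).

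Now I would invoke the Lions--Ne\v{c}as lemma. Since $L^2(\Omega)\embedding H^{-1}(\Omega)$ compactly, a subsequence of $(v_n)$ converges weakly in $L^2$, hence strongly in $H^{-1}$, to some $v$; the distributional gradient is weakly continuous, so $\nabla v=0$; and the estimate $\|w\|_{L^2}\le C(\|w\|_{H^{-1}}+\|\nabla w\|_{H^{-1}})$ applied to $w=v_n-v$ yields $v_n\to v$ strongly in $L^2$. Consequently $P_n=\sym P_n+\anti(v_n)\to\anti(v)=:T$ strongly in $L^2$, and since $\Curl P_n\to 0$ we get $\Curl T=0$ and $P_n\to T$ in the graph norm of $H(\Curl)$; as $H_0(\Curl;\Omega)$ is closed, $T\in H_0(\Curl;\Omega)$ and $\|T\|_{H(\Curl)}=1$. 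On the other hand $\sym T=0$ and $\Curl T=0$: from $\Curl\anti(v)=0$ one deduces $(\nabla v)^T=(\dv v)\,\id$, which forces $v$ to be affine and in fact constant, so $T$ is a constant skew-symmetric tensor; the tangential boundary condition $T_i\times n=0$ on $\partial\Omega$ then forces $T=0$, since a nonzero constant skew tensor cannot have all of its rows parallel to the outer normal along the boundary of a bounded three-dimensional domain. This contradiction proves the theorem.

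The main obstacle is precisely this strong $L^2$-compactness step: $H_0(\Curl;\Omega)$ is \emph{not} compactly embedded in $L^2$ (the kernel of $\Curl$ contains all gradients), so the compactness cannot come for free and must be extracted from the reduction of the skew part to the vector field $v_n$ together with the Lions--Ne\v{c}as regularity lemma; once this is secured, the remaining identification of the trivial kernel of $P\mapsto(\sym P,\Curl P)$ on $H_0(\Curl;\Omega)$ is elementary. Alternatively, one may simply quote the generalized Maxwell compactness (Weck selection) theorem for tensor fields from \cite{NeffPaulyWitsch,NPW2,NPW3,BNPS2} and combine it with this kernel computation.
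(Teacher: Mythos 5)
The paper itself gives no proof of Theorem \ref{wdn}: it simply recalls the inequality as established in the cited references \cite{NeffPaulyWitsch,NPW2,NPW3,BNPS2}, so there is strictly no ``paper's proof'' to compare against. Your argument is, however, correct, and its two central ingredients --- passing from $\skew P$ to the axial vector $v=\axl(\skew P)$, using the identity $\Curl\anti(v)=(\dv v)\,\id-(\nabla v)^T$ to transfer the $\Curl$-information on the skew part into gradient control on $v$, and then invoking the Lions--Ne\v{c}as estimate $\|w\|_{L^2}\le C(\|w\|_{H^{-1}}+\|\nabla w\|_{H^{-1}})$ --- are exactly the mechanisms on which the proofs in the cited Neff--Pauly--Witsch papers rest. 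The computation of $\dv v_n=\tfrac12\tr\Curl P_n$ via $\tr\Curl\sym P_n=0$ and the upgrade from weak $L^2$- to strong $L^2$-convergence of $(v_n)$ via the compact embedding $L^2(\Omega)\hookrightarrow H^{-1}(\Omega)$ are also sound. Two small points deserve tightening if this were written out in full. First, both Rellich compactness and the Lions--Ne\v{c}as lemma require $\Omega$ to be a bounded Lipschitz domain (which the paper's ``piecewise smooth'' assumption is meant to ensure); this hypothesis should be flagged. Second, the final kernel identification is cleanest by duality rather than by the geometric remark about the normal: since the limit $T$ is a constant skew tensor with $\Curl T=0$, choose $Q$ with rows $Q_i=\tfrac12 T_i\times x$ so that $\Curl Q\equiv T$; then the $H_0(\Curl)$-integration-by-parts formula gives $|\Omega|\,\|T\|^2=\int_\Omega\langle T,\Curl Q\rangle\,dv=\int_\Omega\langle\Curl T,Q\rangle\,dv=0$, hence $T=0$. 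This avoids any delicacy about pointwise boundary normals for a merely Lipschitz $\partial\Omega$. With these remarks, your proposal is a valid self-contained proof of the recalled result.
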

While in the model introduced in the previous section, we have used only the Poincar\'e inequality and Korn inequality to show the coercivity of the bilinear form from the left-hand side of the variational form of the boundary initial-value problem, in the model proposed in this section the  estimate in Theorem \ref{wdn} is essential. Indeed, using also the Korn inequality, we obtain that  $\mathcal{W}_1$  is coercive  for this model, too.
Therefore,   {under the additional assumption that $\widetilde{\mathbb{C}}_{\rm micro}$ is positive definite,} a similar analysis as in  the previous section shows that the model presented in this section is well-posed and the solution $(u,P)$ belongs to $ C^2([0,T];H_0^1(\Omega)\times H_0({\rm Curl};\Omega))$.

    { The positive definiteness of  ${\mathbb{C}}_{\rm e}$, ${\mathbb{C}}_{\rm c}$, ${\mathbb{C}}_{\rm micro}$,  ${\mathbb{L}}_{\rm aniso}$ or $\widetilde{\mathbb{C}}_{\rm c}$ is still not necessary in order to have a well-posed model. }

 \section{Final remarks}
 
 An approach based on the semigroup of linear operators may not lead  to an existence result, even of a weak solution, without an a priori assumption on the compatibility of the domains of the operators  defined by the left and right-hand side, respectively, of the system of partial differential equations (which are Banach spaces endowed with the corresponding graph-norms, since the operators generate $C_0$-contractive semigroups in $L^2(\Omega)\times L^2(\Omega)$).  We expect to have the same difficulties when the Galerkin method is used.  {However, for a model considering that the elastic tensors ${\mathbb{C}}_{\rm e}$, ${\mathbb{C}}_{\rm c}$, ${\mathbb{C}}_{\rm micro}$ and  ${\mathbb{L}}_{\rm aniso}$ vanish, the existence follows using techniques introduced in \cite{Picard1,Picard2}. In fact, in this particular case, the existence is based on the fact that the operator acting on $(u_{,tt},P_{,tt})$ is invertible, since it is of the form ${\rm Id}-\mathcal{A}$, where $\mathcal{A}$  generates a  $C_0$-contractive semigroup
 	in an appropiate Hilbert space, see \cite{GhibaNeffExistence}.}
 
    {Further, assuming  that $\widetilde{\mathbb{C}}_{\rm micro}$ is positive definite,  the model remains well-posed also in the quasistatic case, i.e the situation when   $\|\,P_{,t}\|$  and $\|\,u_{,t}\|$ are not present in the expression of the kinetic energy. }
  
 Another remark is that the first model considered in this paper remains well-posed also when the characteristic length scale $L_c$   {tends to zero}, i.e. $\Curl P_{,t}$ and $\Curl P$ are not present in the kinetic energy and in the potential energy density, respectively. The solution will belong to $ C^2([0,T];H_0^1(\Omega)\times L^2(\Omega))$  as long as the forces are in $ C([0,T];H^{-1}(\Omega)\times L^2(\Omega))$  and the initial conditions are assumed to be in $H_0^1(\Omega)\times L^2(\Omega)$ (see also \cite{Hlavacek75} for  a related model). Note also that for   {$L_c\to0$} we do not  prescribe $P$ on the boundary. 
 
 We are not able to say the same for the second model, in the case   {$L_c\to0$}, since the bilinear form $\mathcal{W}_1$ may not be coercive, in general. However, when $\mathbb{C}_{\rm c}=0$ and $\widetilde{\mathbb{C}}_{\rm c}=0$,   {$L_c\to0$} and $\|P_{,t}\|$ is not taken into account in the form of the kinetic energy, the problem involves actually only the functions $u$ and $\sym\, P$. Therefore, in this situation and when $\widetilde{\mathbb{C}}_{\rm micro}$ is positive definite, if $\sym\, P(0), \sym\,{P}_{,t}(0)\in  L^2(\Omega)$, $u(0), {u}_{,t}(0)\in H_0^1(\Omega)$, $u=0$ on the boundary, $P$ is also prescribed on the boundary, and the  forces are in $ C([0,T];H^{-1}(\Omega)\times L^2(\Omega))$,  there exists a unique   solution $(u,\sym \,P)\in C^2([0,T];H_0^1(\Omega)\times L^2(\Omega))$ of the problem in $u$ and $\sym\, P$.  We do not have information about $\skew \,P$, since it is not involved in the equations and clearly it may  not be uniquely determined, in order to satisfy also the initial and the boundary conditions for the full $P$.  
  
  \section{Acknowledgement}   We are sincerely grateful to the reviewers for their insightful comments which helped improve the paper.
  The work of I.D. Ghiba was supported by Alexandru Ioan Cuza University of Iasi (UAIC)
  under the grant GI-17458, within the internal grant competition for
  young researchers.

\begin{footnotesize}
\bibliographystyle{plain} %plain

\end{footnotesize}

\end{document}